\newtheorem{example}{Example}
\newtheorem{lemma}{Lemma}
\newtheorem{theorem}{Theorem}
\newtheorem{definition}{Definition}
\newenvironment{proof}[1][Proof]{\begin{trivlist}
\item[\hskip \labelsep {\bfseries #1}]}{\end{trivlist}}
\begin{document}


\begin{frontmatter}

\title{Nonlinear Bilateral Output-Feedback Control \\ for a Class of Viscous Hamilton-Jacobi PDEs}




\author[nikau]{Nikolaos Bekiaris-Liberis\thanksref{footnoteinfo}}\ead{nikos.bekiaris@dssl.tuc.gr} and  
\author[rafa]{Rafael Vazquez}\ead{rvazquez1@us.es}

\address[nikau]{Department of Production Engineering \& Management, Technical University of Crete, Chania, 73100, Greece.}

\address[rafa]{Department of Aerospace Engineering, University of Seville, Seville 41092, Spain.}            

\thanks[footnoteinfo]{Corresponding author.}



\maketitle
\thispagestyle{empty}     
\pagestyle{empty}

\begin{abstract}
We tackle the boundary control and estimation problems for a class of viscous {Hamilton-Jacobi} PDEs, considering bilateral actuation and sensing, i.e., at the two boundaries of a 1-D spatial domain. First, we solve the nonlinear trajectory generation problem for this type of PDEs, providing the necessary feedforward actions at both boundaries. Second, in order to guarantee trajectory tracking with an arbitrary decay rate, we construct nonlinear, full-state feedback laws employed at the two boundary ends. Third, a nonlinear observer is constructed, using measurements from both boundaries, which is combined with the full-state feedback designs into an observer-based output-feedback law. All of our designs are explicit since they are constructed interlacing a feedback linearizing transformation (which we introduce) with backstepping. Due to the fact that the linearizing transformation is locally invertible, only regional stability results are established, which are, nevertheless, accompanied with  region of attraction estimates. Our stability proofs are based on the utilization of the linearizing transformation together with the employment of backstepping transformations, suitably formulated to handle the case of bilateral actuation and sensing. We illustrate the developed methodologies via application to traffic flow control and we present consistent simulation results.
\end{abstract}

\end{frontmatter}

\interdisplaylinepenalty=2500 

\section{Introduction}
\label{intro}
\subsection{Motivation}
Contrary to linear parabolic Partial Differential Equations (PDEs), for which explicit boundary control and estimation designs are now largely available, see, for instance, \cite{smyshlyaev}, \cite{meurer book}, in the nonlinear case, the design of explicit boundary control and estimation schemes is a more challenging problem. In addition, specific engineering applications, such as, for example, vehicular traffic \cite{pachroo}, \cite{treiber}, plasma systems \cite{frederick}, \cite{frederick1}, fluids \cite{balogh3}, \cite{byrnes}, chemical reactors \cite{meurer book}, heat exchangers \cite{meurer book}, and litium-ion batteries \cite{tang1}, \cite{tang2}, to name only a few, call for the development of systematic control and estimation design methodologies that, besides being able to efficiently exploit the capabilities of the available actuators and sensors, they can also be made fault tolerant. Motivated by scalar, conservation law models for vehicular traffic flow that include a viscous term, in order to account for drivers' look-ahead ability \cite{pachroo}, \cite{treiber}, we consider the problems of boundary control and estimation of a certain class of viscous Hamilton-Jacobi (HJ) PDEs, which constitutes an alternative macroscopic description of traffic flow dynamics \cite{bayen1}, \cite{newell}. In particular, we consider the case in which actuation and sensing is available at both boundaries (which we refer to as ``bilateral" in our control and estimation approaches), aiming at constructing control and estimation schemes capable of utilizing efficiently both the available actuators and the available measurements.





\subsection{Literature}
Arguably, the most relevant results to the ones presented here are those dealing with the controller and observer designs for viscous Burgers-type PDEs, which may be viewed as conservation law counterparts of the class of viscous HJ PDEs with quadratic Hamiltonian considered here. The trajectory generation problem for certain forms of viscous Burgers equations is considered in \cite{krstic1}, \cite{meurer1}, \cite{save}, whereas full-state boundary feedback laws are designed in \cite{meurer fred}, \cite{iwamoto}, \cite{krstic alone}, \cite{krstic magnis1}, \cite{liu}, \cite{andrey meurer}. Observers and output-feedback controllers are presented in \cite{balogh1}, \cite{balogh2}, \cite{byrnes}, \cite{jada}, \cite{krstic1}. Explicit boundary control and observer designs for other nonlinear parabolic PDEs also exist, see, e.g., \cite{hasan general}, \cite{hasan general1}, \cite{meurer tac}, \cite{vasq1}, \cite{vasq2}. Although it is a different problem, for completeness, it should be mentioned that the control design problem of inviscid versions of Burgers or of specific HJ PDEs is considered in, e.g., \cite{aubin}, \cite{blandin}, \cite{bayen1}, \cite{krstic alone}. Bilateral controllers and observers for certain classes of linear parabolic and hyperbolic PDEs are recently developed in \cite{auriol}, \cite{auriol1}, \cite{Vazquez1}, \cite{vazquez2}. We should also mention here that, in comparison to \cite{bekiaris1}, in the present paper we consider, 1) a more general class of viscous HJ PDE systems, 2) the problems of trajectory generation and tracking, and 3) the problems of bilateral control and estimation.



\subsection{Results}
Our contributions are summarized as follows. First, we solve the nonlinear trajectory generation problem for the considered viscous HJ PDE, providing explicit feedforward actions at both boundaries.  The key ingredient in our approach is the employment of a feedback linearizing transformation (inspired by the Hopf-Cole transformation \cite{cole}, \cite{hopf}) that we introduce, which allows us to convert the original nonlinear problem to a motion planning problem for a linear heat equation. We then establish the well-posedness of the feedforward controllers for the original nonlinear PDE system, for reference outputs that belong to Gevrey class (of certain order) with sufficiently small magnitude.



Second, we design full-state feedback laws in order to achieve trajectory tracking, with an arbitrary decay rate, as the system is not, in general, asymptotically stable around a given reference trajectory. Modifying, in a suitable way, the introduced feedback linearizing transformation we recast the original nonlinear control problem to a problem of full-state feedback stabilization of a linear heat equation, with Neumann actuation at each of the two boundaries. The bilateral boundary controllers are designed using the recently introduced backstepping technique \cite{Vazquez1}. We then establish local asymptotic stability of the closed-loop system in $H^1$ norm, employing a Lyapunov functional and we provide an estimate of the region of attraction of the controller. Our stability result is local in $H^1$ norm due to the fact that the linearizing transformation is invertible only locally and, in particularly, the size of the supremum norm of the transformed PDE state should be appropriately restricted.

Third, we turn our attention to the observer-based output-feedback trajectory tracking problem. We design a nonlinear observer, employing boundary measurements from both ends of the spatial domain. The observer design is based on the introduced linearizing transformation and on a suitable formulation of the backstepping methodology in \cite{vazquez2} to the case of a one-dimensional spatial domain. We then show that the bilateral, observer-based output-feedback controller achieves local asymptotic stabilization of the reference trajectory in $H^1$ norm.

Finally, we apply the developed methodologies to a model of highway traffic flow. We illustrate, in simulation, the effectiveness of the proposed control design technique, including also a comparison with the unilateral case (i.e., the case in which a full-state feedback controller is applied only at the one boundary). In general, less control effort is required in the bilateral case, fact that may be useful in actual implementations.

\subsection{Organization}
We start presenting the class of viscous HJ PDEs under consideration and introducing the feedback linearizing transformation in Section~\ref{problem formulation}. We then continue in a way such that a reader interested only in the designs could skip the details of the proofs. Specifically, in Section~\ref{sec gen} we present the nonlinear feedforward control designs. In Section~\ref{sec full-state} we present the nonlinear, full-state feedback controllers and in Section~\ref{sec full-state sta} we prove local asymptotic stability of the closed-loop system. In Section~\ref{sec observer} we present the nonlinear observer design and in Section~\ref{sec observer sta} we prove stability of the closed-loop system under the observer-based output-feedback laws. We present an example of traffic flow control in Section~\ref{traffic app}. Concluding remarks and directions of future research are provided in Section~\ref{conclude}.

\subsection{Notation and Definitions} 
We use the common definition of class $\mathcal{K}$, $\mathcal{K}_{\infty}$ and $\mathcal{KL}$ functions from \cite{khalil}. For a function $u\in L^2(0,1)$ we denote by $\|u(t)\|_{L^2}$ the norm $\|u(t)\|_{L^2}=\sqrt{\int_0^1u(x,t)^2dx}$. For $u\in H^1(0,1)$ we denote by $\|u(t)\|_{H^1}$ the norm $\|u(t)\|_{H^1}=\sqrt{\int_0^1u(x,t)^2dx}+\sqrt{\int_0^1u_x(x,t)^2dx}$. We denote by $C^j(A)$ the space of functions that have continuous derivatives of order $j$ on $A$. We denote an initial condition as $u_0(x)=u(x,t_0)$ with some $t_0\geq0$, for all $x\in[0,1]$. With $C\left([t_0,\infty);H^2\left(0,1\right)\right)$ we denote the class of continuous mappings on $[t_0,\infty)$ with values into $H^2\left(0,1\right)$. We denote by $C_T^{2,1}\left([0,1]\times (t_0,T)\right)$ the space of functions that have continuous spatial derivatives of order $2$ and continuous time derivatives of order $1$ on $[0,1]\times (t_0,T)$, and define $C_{\infty}^{2,1}\!=\!C^{2,1}$. %

\begin{definition}
The function $f(t)$ belongs to $G_{F,M,\gamma}\left(\mathbb{S}\right)$, the Gevrey class of order $\gamma$ in $\mathbb{S}$, if $f(t)\in C^{\infty}\left(\mathbb{S}\right)$ and there exist positive constants $F$, $M$ such that $\sup_{t\in\mathbb{S}}\left|{f}^{(n)}(t)\right|\leq FM^{n}\left(n!\right)^{\gamma}$, for all $n=0,1,2,\ldots$.
\end{definition}

\section{Problem Formulation and Feedback Linearization}
\label{problem formulation}
We consider the following viscous HJ PDE system 
\begin{eqnarray}
u_t(x,t)&=&\epsilon u_{xx}(x,t)-au_x(x,t)\left(b+u_x(x,t)\right)\label{sys1}\\
u_x(0,t)&=&U_0(t)\label{sys2}\\
u_x\left(1,t\right)&=&U_1(t),\label{sys3}
\end{eqnarray}
where $u$ is the PDE state, $x\in[0,1]$ is the spatial variable, $t\geq t_0\geq0$ is time, $\epsilon>0$ is a viscosity coefficient, $a\neq0$ and $b\in\mathbb{R}$ are constant parameters, and $U_0$, $U_1$ are control variables. We introduce next a feedback linearizing transformation, which allows us to convert the problems of trajectory generation and tracking for the nonlinear HJ PDE (\ref{sys1})--(\ref{sys3}) to the corresponding problems for a linear diffusion-advection PDE.

The following locally invertible transformation
\begin{eqnarray}
\bar{v}(x,t)=e^{-\frac{a}{\epsilon}{u}(x,t)}-1,\label{transformation1}
\end{eqnarray}
and the control laws
\begin{eqnarray}
{U}_0(t)&=&-\frac{\epsilon}{a} e^{\frac{a}{\epsilon}{u}(0,t)}\bar{V}_0(t)\label{til1}\\
{U}_1(t)&=&-\frac{\epsilon}{a}e^{\frac{a}{\epsilon}{u}(1,t)}\bar{V}_1(t),\label{til2}
\end{eqnarray}
where $\bar{V}_0$, $\bar{V}_1$ are the new control variables yet to be chosen, transform system (\ref{sys1})--(\ref{sys3}) to
\begin{eqnarray}
\bar{v}_t(x,t)&=&\epsilon \bar{v}_{xx}(x,t)-ab\bar{v}_x(x,t)\label{sys0erv}\\
\bar{v}_x(0,t)&=&\bar{V}_0(t)\\
\bar{v}_x(1,t)&=&\bar{V}_1(t).\label{sysnerv}
\end{eqnarray}
It turns out that in the control design and analysis it is more convenient to perform an additional transformation, namely 
\begin{eqnarray}
v(x,t)=\bar{v}(x,t)e^{-\frac{ab}{2\epsilon}x}, \label{spat}
\end{eqnarray}
in order to re-write (\ref{sys0erv})--(\ref{sysnerv}) as
\begin{eqnarray}
{v}_t(x,t)&=&\epsilon {v}_{xx}(x,t)-\frac{a^2b^2}{4\epsilon}{v}(x,t)\label{sys0ervtra}\\
{v}_x(0,t)&=&{V}_0(t)\label{syssys}\\
{v}_x(1,t)&=&{V}_1(t),\label{sysnervtra}
\end{eqnarray}
where
\begin{eqnarray}
\bar{V}_0(t)&=&V_0(t)+\frac{ab}{2\epsilon}\bar{v}(0,t)\label{bar v0}\\
\bar{V}_1(t)&=&e^{\frac{ab}{2\epsilon}}V_1(t)+\frac{ab}{2\epsilon}\bar{v}(1,t),\label{bar v1}
\end{eqnarray}
and $V_0$, $V_1$ are the new control variables.

\section{Trajectory Generation}
\label{sec gen}
%

In this section we design the feedforward boundary control laws that generate the desired reference outputs. We solve the problem first for the linearized system (\ref{sys0ervtra})--(\ref{sysnervtra}) and we then provide the feedforward actions for the original system (\ref{sys1})--(\ref{sys3}). We consider as outputs of the system the values $u\left(x_0,t\right)$ and $u_x\left(x_0,t\right)$, where $x_0$ is some fixed point within the interval $[0,1]$. 

\begin{theorem}
\label{theorem 1}
Let $y_1^{\rm r}(t)$ and $y_2^{\rm r}(t)$ be in $G_{F,M,\gamma}\left([0,+\infty)\right)$ class with $1\leq\gamma<2$. There exists a positive constant $\mu_1$ such that if $F\leq\mu_1$ then the functions 
\begin{eqnarray}
u^{\rm r}(x,t)&=&-\frac{\epsilon}{a}\ln\left(e^{\frac{ab}{2\epsilon}x}v^{\rm r}(x,t)+1\right)\label{166}\\
{U}^{\rm r}_0(t)&=&-\frac{\epsilon}{a} \frac{v_x^{\rm r}(0,t)+\frac{ab}{2\epsilon}v^{\rm r}(0,t)}{1+v^{\rm r}(0,t)}\label{til1tra}\\
{U}^{\rm r}_1(t)&=&-\frac{\epsilon e^{\frac{ab}{2\epsilon}}}{a} \frac{v_x^{\rm r}(1,t)+\frac{ab}{2\epsilon}v^{\rm r}(1,t)}{1+e^{\frac{ab}{2\epsilon}}v^{\rm r}(1,t)},\label{til2tra}
\end{eqnarray}
where
\begin{eqnarray}
v^{\rm r}(x,t)&=&\sum_{k=0}^{\infty}\frac{1}{\epsilon^k} \frac{\left(x-x_0\right)^{2k}}{\left(2k\right)!}\sum_{m=0}^{k}\binom {k} {m}\left(\frac{a^2b^2}{4\epsilon}\right)^{k-m}\nonumber\\
&&\times{y_{1,v}^{\rm r}}^{(m)}(t)+\sum_{k=0}^{\infty}\frac{1}{\epsilon^k} \frac{\left(x-x_0\right)^{2k+1}}{\left(2k+1\right)!}\nonumber\\
&&\times\sum_{m=0}^{k}\binom {k} {m}\left(\frac{a^2b^2}{4\epsilon}\right)^{k-m}{y_{2,v}^{\rm r}}^{(m)}(t)\label{ref12}\\
y_{1,v}^{\rm r}(t)&=&e^{-\frac{ab}{2\epsilon}x_0}\left(e^{-\frac{a}{\epsilon}y_1^{\rm r}(t)}-1\right)\label{y1v}\\
y_{2,v}^{\rm r}(t)&=&e^{-\frac{ab}{2\epsilon}x_0}\left(\vphantom{\frac{ab}{2\epsilon}\left(e^{-\frac{a}{\epsilon}y_1^{\rm r}(t)}-1\right)}-\frac{a}{\epsilon}e^{-\frac{a}{\epsilon}y_1^{\rm r}(t)}y_2^{\rm r}(t)\right.\nonumber\\
&&\left.-\frac{ab}{2\epsilon}\left(e^{-\frac{a}{\epsilon}y_1^{\rm r}(t)}-1\right)\right),\label{y2v}
\end{eqnarray}
satisfy the boundary value problem (\ref{sys1})--(\ref{sys3}) and, in particular, $u^{\rm r}\left(x_0,t\right)=y_1^{\rm r}(t)$ and $u_x^{\rm r}\left(x_0,t\right)=y_2^{\rm r}(t)$. 
\end{theorem}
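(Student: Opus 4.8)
The plan is to work in the linearized coordinates of Section~\ref{problem formulation} and exploit the classical Gevrey-based motion-planning construction for the heat-type equation~(\ref{sys0ervtra})--(\ref{sysnervtra}). First I would observe that (\ref{ref12}) is precisely the formal power-series solution in $x$ of the ODE-in-$x$ obtained by freezing time in (\ref{sys0ervtra}), i.e.\ $\epsilon v^{\rm r}_{xx}=\frac{a^2b^2}{4\epsilon}v^{\rm r}+v^{\rm r}_t$, with ``initial data'' at $x=x_0$ given by $v^{\rm r}(x_0,t)=y_{1,v}^{\rm r}(t)$ and $v^{\rm r}_x(x_0,t)=y_{2,v}^{\rm r}(t)$; iterating the equation to express all higher $x$-derivatives in terms of time derivatives of these two functions yields exactly the double sum with the binomial coefficients coming from the powers of the operator $\partial_t+\frac{a^2b^2}{4\epsilon}$. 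So by construction, \emph{if} the series converges sufficiently nicely, $v^{\rm r}$ solves (\ref{sys0ervtra}) and matches the prescribed flatness outputs at $x_0$.

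The main analytic step is therefore convergence and regularity of the series (\ref{ref12}). Here I would invoke the Gevrey hypothesis: since $y_1^{\rm r},y_2^{\rm r}\in G_{F,M,\gamma}$ with $1\le\gamma<2$, the functions $y_{1,v}^{\rm r},y_{2,v}^{\rm r}$ defined by (\ref{y1v})--(\ref{y2v}) are again Gevrey of the same order $\gamma$, \emph{provided} $F$ is small enough that the exponential $e^{-\frac{a}{\epsilon}y_1^{\rm r}}$ and its derivatives stay controlled (this is where the smallness constant enters, via a Fa\`a di Bruno / composition estimate for Gevrey classes, possibly shrinking $F\le\mu_1$). Standard estimates then bound ${y_{1,v}^{\rm r}}^{(m)}$ and ${y_{2,v}^{\rm r}}^{(m)}$ by $\tilde F\tilde M^m(m!)^\gamma$; plugging into (\ref{ref12}), the factor $(2k)!$ (resp.\ $(2k+1)!$) in the denominator against $(m!)^\gamma\le(k!)^\gamma$ and $2^k$ from the binomial sum gives, for $\gamma<2$, a ratio that decays factorially in $k$, so the series converges for all $x\in[0,1]$ and $t\ge0$, and can be differentiated termwise twice in $x$ and once in $t$. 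Thus $v^{\rm r}\in C^{2,1}$, it solves the linear PDE (\ref{sys0ervtra}) with Neumann data $V_0^{\rm r}(t)=v^{\rm r}_x(0,t)$, $V_1^{\rm r}(t)=v^{\rm r}_x(1,t)$, and the flatness outputs are reproduced at $x=x_0$.

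Finally I would transfer everything back through the chain of transformations. Undoing (\ref{spat}) gives $\bar v^{\rm r}=v^{\rm r}e^{\frac{ab}{2\epsilon}x}$ solving (\ref{sys0erv})--(\ref{sysnerv}); undoing (\ref{transformation1}) gives $u^{\rm r}=-\frac\epsilon a\ln(\bar v^{\rm r}+1)$, which is exactly (\ref{166}). For this last step to be legitimate one needs $\bar v^{\rm r}(x,t)+1=e^{\frac{ab}{2\epsilon}x}v^{\rm r}(x,t)+1>0$ everywhere; this is guaranteed, again, by taking $F\le\mu_1$ small enough, since then $\|v^{\rm r}\|_\infty$ is small and $\bar v^{\rm r}$ stays near $0$, keeping the argument of the logarithm near $1$. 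Substituting the boundary relations (\ref{bar v0})--(\ref{bar v1}) and then (\ref{til1})--(\ref{til2}) into $U_0^{\rm r},U_1^{\rm r}$, and simplifying $e^{\frac a\epsilon u^{\rm r}(0,t)}=1/(1+v^{\rm r}(0,t))$, $e^{\frac a\epsilon u^{\rm r}(1,t)}=1/(1+e^{\frac{ab}{2\epsilon}}v^{\rm r}(1,t))$, yields (\ref{til1tra})--(\ref{til2tra}). A direct check that $u^{\rm r}$ then satisfies (\ref{sys1}) and the boundary conditions (\ref{sys2})--(\ref{sys3}) with these feedforward inputs completes the proof. The hardest part, and the only place real work is needed, is the Gevrey composition estimate establishing that $y_{1,v}^{\rm r},y_{2,v}^{\rm r}$ remain in a Gevrey class of order $<2$ with a norm small enough to force both series convergence and positivity of the logarithm's argument; the rest is bookkeeping through the explicit transformations.
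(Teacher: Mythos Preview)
Your approach is essentially the same as the paper's: linearize via the Hopf--Cole-type change of variables, build $v^{\rm r}$ as the flatness power series for the heat-type equation (\ref{sys0ervtra}) with Gevrey data at $x_0$, and then pull back through (\ref{spat}) and (\ref{transformation1}). One small correction on where the smallness $F\le\mu_1$ actually enters: neither the Gevrey closure for $y_{1,v}^{\rm r},y_{2,v}^{\rm r}$ (the paper's Lemmas~\ref{lemma1}--\ref{lemma2} give $e^{f}-1\in G_{Fe^F,\,Me^F,\,\gamma}$ for \emph{any} $F$) nor the convergence of the series (\ref{ref12}) (which holds for any Gevrey order $\gamma<2$) requires smallness; $\mu_1$ is needed \emph{only} to force $\sup_{x,t}|v^{\rm r}|$ below $e^{-|ab/2\epsilon|}$ so that the logarithm in (\ref{166}) is well-defined, and this works precisely because the Gevrey constants $\bar F_1,\bar F_2$ in (\ref{31}), (\ref{32}) depend continuously on $F$ and vanish as $F\to 0$.
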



\begin{proof}
Via transformations (\ref{transformation1}) and (\ref{spat}), in order to generate the desired trajectory $u^{\rm r}(x,t)$ and to provide the feedforward laws ${U}^{\rm r}_0(t)$, ${U}^{\rm r}_1(t)$, which achieve $u^{\rm r}\left(x_0,t\right)=y_1^{\rm r}(t)$ and $u_x^{\rm r}\left(x_0,t\right)=y_2^{\rm r}(t)$, it is sufficient to generate $v^{\rm r}(x,t)$ that satisfies (\ref{sys0ervtra}) with
\begin{eqnarray}
v^{\rm r}\left(x_0,t\right)&=&y_{1,v}^{\rm r}(t)\label{ref v1}\\
v^{\rm r}_x\left(x_0,t\right)&=&y_{2,v}^{\rm r}(t),\label{ref v2}
\end{eqnarray}
where $y_{1,v}^{\rm r}(t)$ and $y_{2,v}^{\rm r}(t)$ are defined in (\ref{y1v}) and (\ref{y2v}), respectively. The feedforward laws ${U}^{\rm r}_0(t)$, ${U}^{\rm r}_1(t)$ are then given combining (\ref{syssys}), (\ref{sysnervtra}) with (\ref{bar v0}), (\ref{bar v1}) and (\ref{til1}), (\ref{til2}). Moreover, $v^{\rm r}(x,t)$ should be restricted appropriately such that (\ref{166}) and (\ref{til1tra}) are well-posed, which holds true whenever 
\begin{eqnarray}
\sup_{x\in[0,1]}\left|v^{\rm r}(x,t)\right|<\bar{c}e^{-\left|\frac{ab}{2\epsilon}\right|},\quad \mbox{for all $t\geq t_0$},\label{Res1}
\end{eqnarray}
for some constant $\bar{c}\in(0,1)$, in addition to $v_x^{\rm r}(x,t)$ being bounded for all $x\in[0,1]$ and $t\geq t_0$.

Since system (\ref{sys0ervtra})--(\ref{sysnervtra}) is in the form of a linear diffusion-advection PDE we postulate the reference trajectory $v^{\rm r}$ in the form, see, e.g., \cite{laroche}, \cite{meurer1}, \cite{meurer2}
\begin{eqnarray}
v^{\rm r}(x,t)=\sum_{k=0}^{\infty}\alpha_k(t)\frac{\left(x_0-x\right)^k}{k!},\label{ref1}
\end{eqnarray}
where the functions $\alpha_k(t)$, $k=0,1,\ldots$ are yet to be determined in order for (\ref{ref1}) to satisfy (\ref{sys0ervtra}) as well as (\ref{ref v1}) and (\ref{ref v2}). Substituting (\ref{ref1}) into (\ref{sys0ervtra}) we arrive at the following recursive relation for $\alpha$'s
\begin{eqnarray}
\alpha_{k+2}(t)&=&\frac{1}{\epsilon}\left(\dot{\alpha}_k(t)+\frac{a^2b^2}{4\epsilon}\alpha_k(t)\right)\\
\alpha_0(t)&=&y_{1,v}^{\rm r}(t)\\
\alpha_1(t)&=&-y_{2,v}^{\rm r}(t),
\end{eqnarray}
and thus, (\ref{ref1}) may be written as in (\ref{ref12}), see, e.g., \cite{laroche}, \cite{meurer1}, \cite{meurer2}.  Employing the results in, for example, \cite{meurer1} (Remark 4), one can conclude that the series (\ref{ref12}) is convergent (with an infinite radius of convergence) provided that $y_{1,v}^{\rm r}(t)$ and $y_{2,v}^{\rm r}(t)$ belong to $G_{F_1^*,M_1^*,\gamma}\left([0,+\infty)\right)$ for $1\leq\gamma<2$, for some positive constants $F_1^*$ and $M_1^*$. 

We derive next explicit Gevrey-type estimates for $y_{1,v}^{\rm r}(t)$ and $y_{2,v}^{\rm r}(t)$ as, in order to guarantee that condition (\ref{Res1}) holds, one has to guarantee that, in addition to the functions $y_{1,v}^{\rm r}(t)$ and $y_{2,v}^{\rm r}(t)$ belonging to $G_{{F}_1^*,{M}_1^*,\gamma}\left([0,+\infty)\right)$ for $1\leq\gamma<2$, that the constant ${F}_1^*$ may be chosen sufficiently small when $F$ is sufficiently small. Toward that end, from Lemmas~\ref{lemma1} and \ref{lemma2} in Appendix A we obtain that
\begin{eqnarray}
\sup_{t\geq 0}\left|{y_{1,v}^{\rm r}}^{(n)}(t)\right|&\leq&\bar{F}_1 \bar{M}_1^{n}\left(n!\right)^{\gamma}, \quad \mbox{for all $n=0,1,\ldots$}\label{nik1}\\
\sup_{t\geq 0}\left|{y_{2,v}^{\rm r}}^{(n)}(t)\right|&\leq&\bar{F}_2\bar{M}_2^{n}\left(n!\right)^{\gamma}, \quad \mbox{for all $n=0,1,\ldots$},\label{nik2}
\end{eqnarray}
where 
\begin{eqnarray}
\bar{F}_1&=&F\frac{|a|}{\epsilon}e^{F\frac{|a|}{\epsilon}}e^{-\frac{ab}{2\epsilon}x_0}\label{31}\\
\bar{M}_1&=&Me^{F\frac{|a|}{\epsilon}}\label{contshit1}\\
\bar{F}_2&=&F\frac{|a|}{\epsilon}e^{F\frac{|a|}{\epsilon}}e^{-\frac{ab}{2\epsilon}x_0}\left(e^{-F\frac{|a|}{\epsilon}}+ \frac{|ab|}{2\epsilon} + F\frac{|a|}{\epsilon}\right)\label{32} \\
\bar{M}_2&=&\left(1+F\frac{|a|}{\epsilon}e^{F\frac{|a|}{\epsilon}}\right)\bar{M}_1,\label{contshit2}
\end{eqnarray}
and hence, one can choose $F_1^*=F\frac{|a|}{\epsilon}e^{F\frac{|a|}{\epsilon}}e^{-\frac{ab}{2\epsilon}x_0}$ $\times\max\left\{1,e^{-F\frac{|a|}{\epsilon}}+ \frac{|ab|}{2\epsilon} + F\frac{|a|}{\epsilon}\right\}$ and $M_1^*=\bar{M}_2$. Consequently, series (\ref{ref12}) is convergent. Moreover, combining (\ref{ref12}) and (\ref{nik1}), (\ref{nik2}) we get that
\begin{eqnarray}
\left|v^{\rm r}(x,t)\right|&\leq&  \bar{F}_1\sum_{k=0}^{\infty} \frac{1}{\epsilon^k}\left(k!\right)^{\gamma-2}\left(\frac{a^2b^2}{4\epsilon}+\bar{M}_1\right)^k\nonumber\\
&&+\bar{F}_2\sum_{k=0}^{\infty}\frac{1}{\epsilon^k}\left(k!\right)^{\gamma-2}\left(\frac{a^2b^2}{4\epsilon}+\bar{M}_2\right)^k,\label{ref12new}
\end{eqnarray}
where we used the fact that $\left(k!\right)^2\leq \left(2k\right)!$. For all $x\in[0,1]$ the general term, say $\zeta_k$, in the first series satisfies 
\begin{eqnarray}
\left|\frac{\zeta_{k+1}}{\zeta_k}\right|= \frac{1}{\epsilon}\left(\frac{a^2b^2}{4\epsilon}+\bar{M}_1\right)\left(k+1\right)^{\gamma-2},\end{eqnarray}
and thus, since $\gamma<2$, we conclude that $\lim_{k\to\infty}\left|\frac{\zeta_{k+1}}{\zeta_k}\right|=0<1$, which in turn implies, employing D'Alembert's criterion, that the infinite sum converges to a positive number, say $l_1$. Similarly, the second infinite sum converges to a positive number, say $l_2$. Therefore, from (\ref{ref12new}) we arrive at
\begin{eqnarray}
\left|v^{\rm r}(x,t)\right|&\leq&  \max\left\{\bar{F}_1,\bar{F}_2\right\}\left(l_1+l_2\right),\nonumber\\
&&\mbox{for all $x\in[0,1]$ and $t\geq t_0$}\label{ref12new1},
\end{eqnarray}
and hence, by choosing $\mu_1$ such that $\max\left\{\bar{F}_1,\bar{F}_2\right\}\left(l_1+l_2\right)$ $<\bar{c}e^{-\left|\frac{ab}{2\epsilon}\right|}$, for some constant $\bar{c}\in(0,1)$, which, according to relations (\ref{31}), (\ref{32}) is always possible (note that $l_1$, $l_2$ are continuous functions of $F$ since the two series in (\ref{ref12new}) converge uniformly and from (\ref{contshit1}), (\ref{contshit2}) it follows that $\bar{M}_1$, $\bar{M}_2$ are continuous with respect to $F$), condition (\ref{Res1}) is satisfied. It follows from (\ref{166}) that $u^{\rm r}(x,t)$ is uniformly bounded with respect to time and spatial variable. The uniform boundedness, with respect to time and spatial variable, of $v_x^{\rm r}(x,t)$, $v_{xx}^{\rm r}(x,t)$, and $v_t^{\rm r}(x,t)$, which, from (\ref{166}) and (\ref{Res1}), imply the uniform boundedness of $u_x^{\rm r}(x,t)$, $u_{xx}^{\rm r}(x,t)$, and $u_t^{\rm r}(x,t)$, follow by differentiating (\ref{ref12}) and employing almost identical arguments (see also, e.g., Section 3 in \cite{laroche}). \qed

\end{proof}

\begin{example}
\rm{Consider system (\ref{sys1})--(\ref{sys3}) with $a=-1$, $b=0$ and assume that the desired reference trajectories are $y_1^{\rm r}(t)=0$ and $y_2^{\rm r}(t)=d\textrm{sin}(t)$, where $d>0$. For sufficiently small $d$ the conditions of Theorem \ref{theorem 1} are satisfied. The reference trajectory as well as the reference inputs are given by
\begin{eqnarray}
u^{\rm r}(x,t)&=&\epsilon\ln\left(1+g_1(x,t)\right)\label{166rr}\\
g_1(x,t)&=&\frac{d}{2\sqrt{\epsilon}}e^{\frac{x-x_0}{\sqrt{2\epsilon}}}\textrm{sin}\left(t+\frac{x-x_0}{\sqrt{2\epsilon}}-\frac{\pi}{4}\right)\nonumber\\
&&-\frac{d}{2\sqrt{\epsilon}}e^{\frac{x_0-x}{\sqrt{2\epsilon}}}\textrm{sin}\left(t+\frac{x_0-x}{\sqrt{2\epsilon}}-\frac{\pi}{4}\right)\\
{U}^{\rm r}_0(t)&=&\frac{d}{2}\frac{e^{-\frac{x_0}{\sqrt{2\epsilon}}}\textrm{sin}\left(t-\frac{x_0}{\sqrt{2\epsilon}}\right)}{1+g_1(0,t)}\nonumber\\
&&+\frac{d}{2} \frac{e^{\frac{x_0}{\sqrt{2\epsilon}}}\textrm{sin}\left(t+\frac{x_0}{\sqrt{2\epsilon}}\right)}{1+g_1(0,t)}\label{til1trarr}\\
{U}^{\rm r}_1(t)&=&\frac{d}{2} \frac{e^{\frac{1-x_0}{\sqrt{2\epsilon}}}\textrm{sin}\left(t+\frac{1-x_0}{\sqrt{2\epsilon}}\right)}{1+g_1(1,t)}\nonumber\\
&&+\frac{d}{2}\frac{e^{\frac{x_0-1}{\sqrt{2\epsilon}}}\textrm{sin}\left(t+\frac{x_0-1}{\sqrt{2\epsilon}}\right)}{1+g_1(1,t)},\label{til2trarr}
\end{eqnarray}
where we also used the fact that $\textrm{sin}\left(y\right)-\textrm{cos}\left(y\right)=\sqrt{2}\textrm{sin}\left(y-\frac{\pi}{4}\right)$, for any $y\in\mathbb{R}$. In Fig. \ref{fig tr 1} we show the generated trajectory $u^{\rm r}$ as well as its spatial derivative $u_x^{\rm r}$. 
\begin{figure}[t]
\centering
\includegraphics[width=\linewidth]{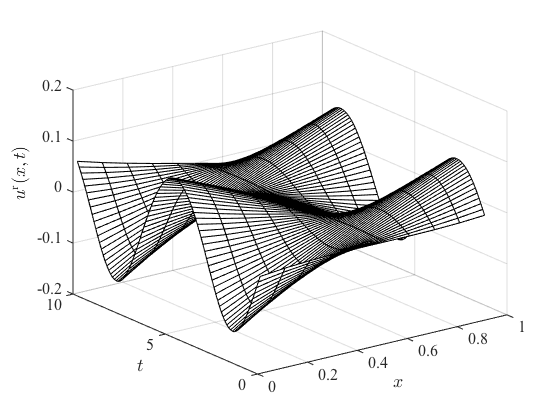}
\includegraphics[width=\linewidth]{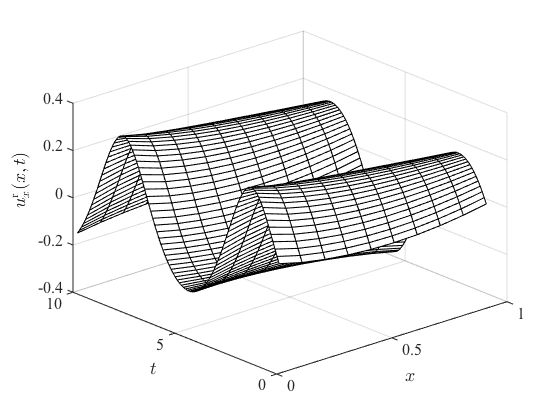}
\caption{Functions (\ref{166rr})--(\ref{til2trarr}) that solve the nonlinear trajectory generation problem for system (\ref{sys1})--(\ref{sys3}) with $a=-1$, $b=0$, and $\epsilon=0.5$, with reference trajectories $y_1^{\rm r}(t)=0$ and $y_2^{\rm r}(t)=0.25\textrm{sin}(t)$, for $x_0=\frac{1}{2}$.}
\label{fig tr 1}
\end{figure}

To see that the functions defined in (\ref{166rr})--(\ref{til2trarr}) solve the nonlinear trajectory generation problem note first that relation (\ref{ref12}) in the present case becomes
\begin{eqnarray}
v^{\rm r}(x,t)&=&\frac{d}{\epsilon}\sum_{k=0}^{\infty}\frac{1}{\epsilon^k} \frac{\left(x-x_0\right)^{2k+1}}{\left(2k+1\right)!}\textrm{sin}^{(k)}(t)\label{ref12re1}.
\end{eqnarray}}Using the facts that $\textrm{sin}(t)=\frac{1}{2j}\left(e^{j t}-e^{-j t}\right)$, $j=\frac{1}{2}\left(1+j\right)^2$, and $-j=\frac{1}{2}\left(1-j\right)^2$ we obtain
\begin{eqnarray}
v^{\rm r}(x,t)&=&\frac{d}{2j\epsilon}\sum_{k=0}^{\infty}\frac{1}{\epsilon^k} \frac{\left(x-x_0\right)^{2k+1}}{\left(2k+1\right)!}\left(\frac{1}{2^k}\left(1+j\right)^{2k}e^{jt}\right.\nonumber\\
&&\left.-\frac{1}{2^k}\left(1-j\right)^{2k}e^{-jt}\right)\label{ref12re212},
\end{eqnarray}
and hence, employing the power series expansion for the hyperbolic sine we arrive at
\begin{eqnarray}
v^{\rm r}(x,t)&=&\frac{d}{\sqrt{2\epsilon}j}\left(\frac{1}{1+j}e^{j t}\textrm{sinh}\left(\frac{\left(x-x_0\right)\left(1+j\right)}{\sqrt{2\epsilon}}\right)\right.\nonumber\\
&&\left.-\frac{1}{1-j}e^{-j t}\textrm{sinh}\left(\frac{\left(x-x_0\right)\left(1-j\right)}{\sqrt{2\epsilon}}\right)\right).
\end{eqnarray}
Performing some tedious algebraic manipulations we get
\begin{eqnarray}
v^{\rm r}(x,t)&=&\frac{d}{2\sqrt{\epsilon}}e^{\frac{x-x_0}{\sqrt{2\epsilon}}}\textrm{sin}\left(t+\frac{x-x_0}{\sqrt{2\epsilon}}-\frac{\pi}{4}\right)\nonumber\\
&&-\frac{d}{2\sqrt{\epsilon}}e^{\frac{x_0-x}{\sqrt{2\epsilon}}}\textrm{sin}\left(t+\frac{x_0-x}{\sqrt{2\epsilon}}-\frac{\pi}{4}\right).\label{vb}
\end{eqnarray}
Equations (\ref{166rr})--(\ref{til2trarr}) are then derived combining (\ref{vb}) with (\ref{166})--(\ref{til2tra}).
\end{example}


\section{Bilateral Full-State Feedback Boundary Control Design}
\label{sec full-state}
Having available the reference trajectory for system (\ref{sys1})--(\ref{sys3}), in this section, we design the boundary feedback laws that stabilize the desired reference trajectory for any initial condition. We start deriving the dynamics of the error between the actual and the reference states. We then introduce a feedback linearizing transformation for the tracking error's dynamics, which, in turn, enables us to design full-state feedback, boundary control laws utilizing infinite-dimensional backstepping for linear systems.

\subsection{Tracking error dynamics and motivation for control}
We define the error variables 
\begin{eqnarray}
\tilde{u}(x,t)&=&u(x,t)-u^{\rm r}(x,t)\label{error u}\\ 
\tilde{U}_0(t)&=&U_0(t)-U_0^{\rm r}(t)\\
\tilde{U}_1(t)&=&U_1(t)-U_1^{\rm r}(t).
\end{eqnarray}
Differentiating (\ref{error u}) with respect to $t$ and $x$, using the fact that $u^{\rm r}(x,t)$ satisfies system (\ref{sys1})--(\ref{sys3}) we get that $\tilde{u}$ satisfies the following system
\begin{eqnarray}
\tilde{u}_t(x,t)&=&\epsilon \tilde{u}_{xx}(x,t)-a\tilde{u}_x(x,t)\left(b+\tilde{u}_x(x,t)\right)\nonumber\\
&&-2au_x^{\rm r}(x,t)\tilde{u}_x(x,t)\label{sys1er}\\
\tilde{u}_x(0,t)&=&\tilde{U}_0(t)\label{sys2er}\\
\tilde{u}_x\left(1,t\right)&=&\tilde{U}_1(t).\label{sys3er}
\end{eqnarray}
A feedback control design is needed to asymptotically stabilize the origin of (\ref{sys1er})--(\ref{sys3er}). To see this note that the zero solution of (\ref{sys1er})--(\ref{sys3er}) is not asymptotically stable since any constant could be an equilibrium of (\ref{sys1er})--(\ref{sys3er}).

\subsection{Feedback linearizing transformation for the tracking error dynamics}
Guided from the feedback linearizing transformation (\ref{transformation1}) we define
\begin{eqnarray}
\tilde{\bar{v}}(x,t)=e^{-\frac{a}{\epsilon}\tilde{u}(x,t)}-1,\label{transformation1er}
\end{eqnarray}
which it is readily shown that satisfies the following PDE
\begin{eqnarray}
\tilde{\bar{v}}_t(x,t)&=&\epsilon \tilde{\bar{v}}_{xx}(x,t)-a\left(b+2u_x^{\rm r}(x,t)\right)\tilde{\bar{v}}_x(x,t)\label{sys0erver}\\
\tilde{\bar{v}}_x(0,t)&=&\tilde{\bar{V}}_0(t)\label{convtil1}\\
\tilde{\bar{v}}_x(1,t)&=&\tilde{\bar{V}}_1(t),\label{sysnerver}
\end{eqnarray}
where we choose
\begin{eqnarray}
\tilde{U}_0(t)&=&-\frac{\epsilon}{a} e^{\frac{a}{\epsilon}\tilde{u}(0,t)}\tilde{\bar{V}}_0(t)\label{til1er}\\
\tilde{U}_1(t)&=&-\frac{\epsilon}{a}e^{\frac{a}{\epsilon}\tilde{u}(1,t)}\tilde{\bar{V}}_1(t),\label{til2er}
\end{eqnarray}
and $\tilde{\bar{V}}_0(t)$, $\tilde{\bar{V}}_1(t)$ are new control variables. With the additional transformation (see also Fig. \ref{inter})
\begin{eqnarray}
\tilde{{v}}(x,t)=\tilde{\bar{v}}(x,t)e^{-\frac{ab}{2\epsilon}x-\frac{a}{\epsilon}u^{\rm r}(x,t)},\label{transformation1add}
\end{eqnarray}
and selecting the control variables $\tilde{\bar{V}}_0(t)$, $\tilde{\bar{V}}_1(t)$ as
\begin{eqnarray}
\tilde{\bar{V}}_0(t)&=&  e^{\frac{a}{\epsilon}{u^{\rm r}}(0,t)} \tilde{V}_0(t)\nonumber\\
&&+\frac{a}{\epsilon}\left(\frac{b}{2}+u_x^{\rm r}(0,t)\right)\left(e^{-\frac{a}{\epsilon}\tilde{u}(0,t)}-1\right) \label{til1er1}\\
\tilde{\bar{V}}_1(t)&=&  e^{\frac{ab}{2\epsilon}+\frac{a}{\epsilon}{u^{\rm r}}(1,t)} \tilde{V}_1(t)\nonumber\\
&&+\frac{a}{\epsilon}\left(\frac{b}{2}+u_x^{\rm r}(1,t)\right)\left(e^{-\frac{a}{\epsilon}\tilde{u}(1,t)}-1\right) \label{til2er1},
\end{eqnarray}
we arrive at the following system
\begin{eqnarray}
\tilde{v}_t(x,t)&=&\epsilon \tilde{v}_{xx}(x,t)-\frac{a^2b^2}{4\epsilon}\tilde{v}(x,t)\label{error1}\\
\tilde{v}_x(0,t)&=&\tilde{V}_0(t)\label{shh}\\
\tilde{v}_x(1,t)&=&\tilde{V}_1(t),\label{error2}
\end{eqnarray}
where the control variables $\tilde{V}_0(t)$ and $\tilde{V}_1(t)$ are chosen later on (in Section \ref{subback}) via the backstepping methodology. 

\begin{figure}
\centering
\vspace{0.2in}
\begin{overpic}[width=\linewidth]{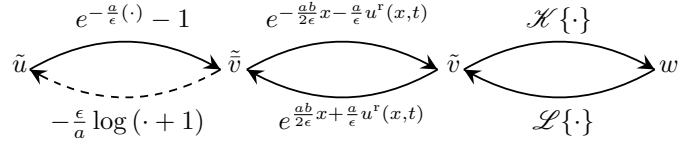}
\put(8,13){$e^{-\frac{a}{\epsilon}(\cdot)}-1$}
\put(38,13){$e^{-\frac{ab}{2\epsilon}x-\frac{a}{\epsilon}u^{\rm r}(x,t)}$}
\put(40,-3){$e^{\frac{ab}{2\epsilon}x+\frac{a}{\epsilon}u^{\rm r}(x,t)}$}
\put(4,-3){$-\frac{\epsilon}{a}\log\left(\cdot+1\right)$}
\put(79,13){$\mathscr{K}\{\cdot\}$}
\put(80,-3){$\mathscr{L}\{\cdot\}$}
\put(-2,6){$\tilde{u}$}
\put(32,6){$\tilde{\bar{v}}$}
\put(66.5,6){$\tilde{v}$}
\put(100,6){${w}$}
\end{overpic}
\vspace{0.1in}
\caption{The interconnections between $\tilde{u}$, $\tilde{\bar{v}}$, $\tilde{v}$, and $w$ involved in transformations (\ref{transformation1er}), (\ref{transformation1add}), and (\ref{trans1}). The operators $\mathscr{K}\{\cdot\}$ and $\mathscr{L}\{\cdot\}$ are defined as $\mathscr{K}\{\tilde{v}\}(x)=\tilde{v}(x)-\int_{-x+1}^xk(x,y)\tilde{v}(y)dy$ and $\mathscr{L}\{w\}(x)=w(x)+\int_{-x+1}^xl(x,y)w(y)dy$ respectively. The leftmost transformation (i.e., transformation (\ref{transformation1er})) is only locally invertible.}
\label{inter}
\end{figure}

Note that system (\ref{error1})--(\ref{error2}), besides being linear, does not incorporate any spatially- or time-dependent terms, which may be the case when considering trajectory tracking problems for nonlinear systems. This is possible here because the overall feedback linearizing transformation (\ref{transformation1add}) may be expressed as the difference of two nonlinear functions of $u$ and $u^{\rm r}$, which both satisfy the linear PDE (\ref{sys0ervtra}) (or, equivalently, (\ref{error1})) since both $u$ and $u^{\rm r}$ satisfy (\ref{sys1}). Moreover, relations (\ref{shh}), (\ref{error2}) are derived differentiating (\ref{transformation1add}) with respect to $x$ and using (\ref{convtil1}), (\ref{sysnerver}) as well as defining the new control inputs $\tilde{V}_0$, $\tilde{V}_1$ according to (\ref{til1er1}), (\ref{til2er1}).



\subsection{Bilateral boundary control design}
\label{subback}


Exploiting the fact that the $\tilde{v}$ variable satisfies the linear diffusion-advection PDE (\ref{error1})--(\ref{error2}) we design the boundary feedback laws as \cite{Vazquez1} 
\begin{eqnarray}
\tilde{V}_0(t)&=&k\left(0,0\right)\tilde{v}\left(0,t\right)-\int_{0}^1k_x\left(0,\xi\right)\tilde{v}\left(\xi,t\right)d\xi\label{bac1}\\
\tilde{V}_1(t)&=&k\left(1,1\right)\tilde{v}\left(1,t\right)+\int_{0}^1k_x\left(1,\xi\right)\tilde{v}\left(\xi,t\right)d\xi,\label{bac2}
\end{eqnarray}
where the kernel $k\left(x,\xi\right)$ is given explicitly, for $(x,\xi)$ in the domain $D=D_1\cup D_2$, where $D_1=\left\{\left(x,\xi\right):\frac{1}{2}\leq x\leq 1,\right.$ $\left.-x+1\leq\xi\leq x\right\}$ and $D_2=\left\{\left(x,\xi\right):0\leq x\leq \frac{1}{2},\right.$ $\left.x\leq\xi\leq 1-x\right\}$, by
\begin{eqnarray}
k(x,\xi)&=&-\frac{1}{2}\sqrt{\frac{c_1}{\epsilon}}\frac{{\rm I}_{1}\left(\sqrt{\frac{c_1}{\epsilon}\left(\left(x-\frac{1}{2}\right)^2-\left(\xi-\frac{1}{2}\right)^2\right)}\right)}{\sqrt{\left(x-\frac{1}{2}\right)^2-\left(\xi-\frac{1}{2}\right)^2}}\nonumber\\
&&\times\left(x+\xi-1\right)\label{poo},
\end{eqnarray}
with ${\rm I}_1$ denoting the modified Bessel function of the first kind of first order. Combining (\ref{til1er}), (\ref{til2er}) and (\ref{til1er1}), (\ref{til2er1}) with (\ref{bac1}), (\ref{bac2}) the boundary feedback laws in the original variables are written via (\ref{transformation1er}), (\ref{transformation1add}) as
\begin{eqnarray}
{U}_0(t)&=&-\frac{\epsilon}{a} e^{\frac{a}{\epsilon}{\tilde{u}}(0,t)}\left(\left(k\left(0,0\right)+\frac{ab}{2\epsilon}\right)\left(e^{-\frac{a}{\epsilon}\tilde{u}(0,t)}-1\right)\right.\nonumber\\
&&\left.\vphantom{\int_{0}^1K_x\left(-\frac{1}{2},\xi-\frac{1}{2}\right)}-e^{\frac{a}{\epsilon}{u^{\rm r}}(0,t)}\int_{0}^1k_x\left(0,\xi\right)e^{-\frac{ab}{2\epsilon}\xi-\frac{a}{\epsilon}u^{\rm r}\left(\xi,t\right)}\right.\nonumber\\
&&\left.\times\left(e^{-\frac{a}{\epsilon}\tilde{u}\left(\xi,t\right)}-1\right)d\xi\vphantom{\left(k\left(0,0\right)+\frac{ab}{2\epsilon}\right)\left(e^{-\frac{a}{\epsilon}\tilde{u}(0,t)}-1\right)}\right)+U_0^{\rm r}(t)e^{\frac{a}{\epsilon}{\tilde{u}}(0,t)}\label{con1}\\
{U}_1(t)&=&-\frac{\epsilon}{a} e^{\frac{a}{\epsilon}{\tilde{u}}(1,t)}\left(\left(k\left(1,1\right)+\frac{ab}{2\epsilon}\right)\left(e^{-\frac{a}{\epsilon}\tilde{u}(1,t)}-1\right)\right.\nonumber\\
&&\left.\vphantom{\int_{0}^1K_x\left(\frac{1}{2},\xi-\frac{1}{2}\right)}+e^{\frac{ab}{2\epsilon}+\frac{a}{\epsilon}u^{\rm r}\left(1,t\right)}\int_{0}^1k_x\left(1,\xi\right)e^{-\frac{ab}{2\epsilon}\xi-\frac{a}{\epsilon}u^{\rm r}\left(\xi,t\right)}\right.\nonumber\\
&&\left.\times\left(e^{-\frac{a}{\epsilon}\tilde{u}\left(\xi,t\right)}-1\right)d\xi\vphantom{\left(k\left(0,0\right)+\frac{ab}{2\epsilon}\right)\left(e^{-\frac{a}{\epsilon}\tilde{u}(0,t)}-1\right)}\right)+U_1^{\rm r}(t)e^{\frac{a}{\epsilon}{\tilde{u}}(1,t)},\label{con2}
\end{eqnarray}
where $U_0^{\rm r}(t)$ and $U_1^{\rm r}(t)$ are defined in (\ref{til1tra}) and (\ref{til2tra}), respectively, with the error variable $\tilde{u}$ being defined in (\ref{error u}) and the reference trajectory $u^{\rm r}$ being defined in (\ref{166}).

\section{Trajectory Tracking Under Full-State Feedback}
\label{sec full-state sta}
In order to show asymptotic stability of the closed-loop system, under the full-state feedback laws, in the original variable $\tilde{u}$ we have to ensure that the linearizing transformation (\ref{transformation1er}) is invertible. The inverse of transformation (\ref{transformation1er}) is given by (see also Fig. \ref{inter})
\begin{eqnarray}
\tilde{u}(x,t)=-\frac{\epsilon}{a}{\rm ln}\left(\tilde{\bar{v}}(x,t)+1\right),\label{transformationinv}
\end{eqnarray}
which is well-defined when the initial conditions and solutions of the system satisfy for some $c\in(0,1]$ 
\begin{eqnarray}
\sup_{x\in[0,1]}|\tilde{\bar{v}}(x,t)|<c,\quad \mbox{for all $t\geq t_0$}.\label{feasibility}
\end{eqnarray}
Due to the feasibility condition (\ref{feasibility}), only a local stability result can be obtained, which is stated next. 

\begin{theorem}
\label{theorem 2}
Consider a closed-loop system consisting of the plant (\ref{sys1})--(\ref{sys3}) and the control laws (\ref{con1}), (\ref{con2}). Under the conditions of Theorem \ref{theorem 1} for the reference outputs, there exist a positive constant $\mu$ and a class $\mathcal{KL}$ function $\beta$ such that for all initial conditions ${u}_0\in H^2(0,1)$ which are compatible with the feedback laws (\ref{con1}), (\ref{con2}) and which satisfy
\begin{eqnarray}
\|\tilde{u}\left(t_0\right)\|_{H^1}&<&\mu\label{region},
\end{eqnarray}
the following holds
\begin{eqnarray}
\|\tilde{u}(t)\|_{H^1}&\leq& \beta\left(\|\tilde{u}\left(t_0\right)\|_{H^1},t-t_0\right),\quad\mbox{for all $t\geq t_0$}\label{thm1est}.
\end{eqnarray}
Moreover, the closed-loop system has a unique solution $u\in C\left([t_0,\infty);H^2(0,1)\right)$ with $u\in C^{2,1}\left([0,1]\times \left(t_0,\infty\right)\right)$.
\end{theorem}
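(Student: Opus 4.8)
The plan is to transport exponential stability backward along the chain of maps in Fig.~\ref{inter}: establish it first for the fully linear target system of $w$, then pull it back through the (bounded, boundedly invertible) backstepping transformation to $\tilde v$, through the bounded spatially weighted map to $\tilde{\bar v}$, and finally, on a sufficiently small ball, through the nonlinear, only locally invertible map to $\tilde u$, all the while guaranteeing that the feasibility condition (\ref{feasibility}) is never violated. By the construction in Section~\ref{sec full-state}, the feedback laws (\ref{con1}), (\ref{con2}) are precisely those for which $w=\mathscr{K}\{\tilde v\}$ with kernel (\ref{poo}) maps (\ref{error1})--(\ref{error2}) into a heat equation for $w$ with homogeneous Neumann data at both ends and a reaction coefficient that can be made as negative as desired via the design constant $c_1$. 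For such a target system, the Lyapunov functional $\Omega(t)=\tfrac12\|w(t)\|_{L^2}^2+\tfrac{\theta}{2}\|w_x(t)\|_{L^2}^2$, with $\theta>0$ chosen appropriately, satisfies $\dot\Omega\le-2\lambda\Omega$ for a $\lambda>0$ that grows with $c_1$ (the boundary terms produced by the integrations by parts all vanish because $w_x=0$ at $x=0,1$), whence $\|w(t)\|_{H^1}\le\kappa e^{-\lambda(t-t_0)}\|w(t_0)\|_{H^1}$.

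Since $k$ and the inverse kernel $l$ are bounded on the triangular domain $D$, the operators $\mathscr{K},\mathscr{L}$ are bounded on $L^2(0,1)$ and on $H^1(0,1)$, so $\|\tilde v(t)\|_{H^1}$ and $\|w(t)\|_{H^1}$ are equivalent; moreover, by Theorem~\ref{theorem 1} the functions $u^{\rm r}$ and $u^{\rm r}_x$ are uniformly bounded, so the weight $e^{-\frac{ab}{2\epsilon}x-\frac{a}{\epsilon}u^{\rm r}(x,t)}$ in (\ref{transformation1add}) and its $x$-derivative are bounded above and below uniformly in $t$, which makes $\|\tilde v(t)\|_{H^1}$ and $\|\tilde{\bar v}(t)\|_{H^1}$ equivalent. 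Hence $\|\tilde{\bar v}(t)\|_{H^1}$ decays exponentially with a bound proportional to $\|\tilde{\bar v}(t_0)\|_{H^1}$. The remaining, and I expect hardest, step is the passage between $\tilde{\bar v}$ and $\tilde u$: using Agmon's inequality $\|g\|_{L^\infty(0,1)}\le C\|g\|_{H^1(0,1)}$, the elementary bound $|e^{s}-1|\le|s|e^{|s|}$, and — on the region where $\sup_x|\tilde{\bar v}(x,t)|<c$ — the analogous estimates for the inverse (\ref{transformationinv}) and its $x$-derivative, one obtains class-$\mathcal{K}$ functions $\phi_1,\phi_2$ with $\phi_1(\|\tilde u(t)\|_{H^1})\le\|\tilde{\bar v}(t)\|_{H^1}\le\phi_2(\|\tilde u(t)\|_{H^1})$. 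The circular dependence — a small $\tilde{\bar v}$ is needed to invert back to $\tilde u$, but $\tilde{\bar v}$ is only controlled once $\tilde u$ is known small — is closed by choosing $\mu$ small enough that $\|\tilde u(t_0)\|_{H^1}<\mu$ forces (through the above equivalences and the overhead constant $\kappa$) $\sup_x|\tilde{\bar v}(x,t)|<c$ at $t=t_0$ with enough margin that the exponential decay keeps it below $c$ for all $t\ge t_0$; a standard continuation/barrier argument then rules out a finite escape time for the feasibility condition. Consequently (\ref{transformationinv}) stays valid and composing the bounds yields $\|\tilde u(t)\|_{H^1}\le\phi_1^{-1}\!\big(\mathrm{const}\cdot e^{-\lambda(t-t_0)}\phi_2(\|\tilde u(t_0)\|_{H^1})\big)$, which defines the desired $\beta\in\mathcal{KL}$.

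Finally, well-posedness $u\in C([t_0,\infty);H^2(0,1))$ with $u\in C^{2,1}([0,1]\times(t_0,\infty))$ follows from classical linear parabolic theory for the $w$-system, given initial data compatible with (\ref{con1}), (\ref{con2}), transported back through the transformations of Fig.~\ref{inter}, which are $C^\infty$ diffeomorphisms on the ball where (\ref{feasibility}) holds. The main obstacles are thus (i) carrying $H^1$ rather than merely $L^2$ estimates through the nonlinear and spatially weighted maps, and (ii) making the smallness of $\mu$ quantitatively consistent with $\kappa$ so that the feasibility condition is preserved for all time along the closed-loop trajectory.
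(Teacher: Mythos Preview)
Your proposal is correct and follows essentially the same route as the paper: exponential $H^1$ decay for the target $w$-system via the Lyapunov functional $\tfrac12\|w\|_{L^2}^2+\tfrac12\|w_x\|_{L^2}^2$, transported back through the bounded pair $\mathscr{K},\mathscr{L}$ to $\tilde v$, through the weighted map (\ref{transformation1add}) to $\tilde{\bar v}$, and finally via (\ref{transformationinv}) to $\tilde u$ (the paper packages the last two steps as Lemmas~\ref{lemma 1thm2}--\ref{lemma 3thm2}, with the lower bound $\phi_1$ in fact linear). One simplification the paper exploits that removes what you flag as the hardest step: since the feedbacks (\ref{bac1}), (\ref{bac2}) depend only on $\tilde v$, the closed-loop $\tilde v$-system (\ref{error1})--(\ref{error2}) is genuinely linear and its $H^1$ decay --- hence that of $\tilde{\bar v}$ --- holds unconditionally, independent of whether (\ref{transformationinv}) is defined; thus no continuation/barrier argument is needed, and $\mu$ is chosen directly from the a-priori bound $\|\tilde{\bar v}(t)\|_{H^1}\le\nu_4\,\alpha_1(\|\tilde u(t_0)\|_{H^1})$ so that Agmon's inequality keeps (\ref{feasibility}) satisfied for all $t\ge t_0$.
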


The proof of Theorem \ref{theorem 2} is based on the following three lemmas whose proofs can be found in Appendix B. Note that the compatibility conditions in the statement of Theorem \ref{theorem 2} are the following
\begin{eqnarray}
{u}_{0}'(0)&=&-\frac{\epsilon}{a} e^{\frac{a}{\epsilon}\left({u}_0(0)-u^{\rm r}\left(0,t_0\right)\right)}\left(\left(k\left(0,0\right)+\frac{ab}{2\epsilon}\right)\right.\nonumber\\
&&\left.\times\left(e^{-\frac{a}{\epsilon}\left({u}_0(0)-u^{\rm r}\left(0,t_0\right)\right)}-1\right)-e^{\frac{a}{\epsilon}{u^{\rm r}}\left(0,t_0\right)}\right.\nonumber\\
&&\!\left.\times\int_{0}^1k_x\left(0,\xi\right)e^{-\frac{ab}{2\epsilon}\xi}\left(e^{-\frac{a}{\epsilon}{u}_0\left(\xi\right)}-e^{-\frac{a}{\epsilon}u^{\rm r}\left(\xi,t_0\right)}\right)\right.\nonumber\\
&&\left.\times d\xi\vphantom{\left(k\left(0,0\right)+\frac{ab}{2\epsilon}\right)\left(e^{-\frac{a}{\epsilon}\tilde{u}(0,t)}-1\right)}\right)+u_x^{\rm r}\left(0,t_0\right)e^{\frac{a}{\epsilon}\left({u}_0(0)-u^{\rm r}\left(0,t_0\right)\right)}\\
{u}_{0}'(1)&=&-\frac{\epsilon}{a} e^{\frac{a}{\epsilon}\left({u}_0\left(1\right)-u^{\rm r}\left(1,t_0\right)\right)}\left(\left(k\left(1,1\right)+\frac{ab}{2\epsilon}\right)\right.\nonumber\\
&&\left.\times\left(e^{-\frac{a}{\epsilon}\left({u}_0\left(1\right)-u^{\rm r}\left(1,t_0\right)\right)}-1\right)+e^{\frac{ab}{2\epsilon}+\frac{a}{\epsilon}u^{\rm r}\left(1,t_0\right)}\right.\nonumber\\
&&\left.\times\int_{0}^1k_x\left(1,\xi\right)e^{-\frac{ab}{2\epsilon}\xi-\frac{a}{\epsilon}u^{\rm r}\left(\xi,t_0\right)}\right.\nonumber\\
&&\left.\times\left(e^{-\frac{a}{\epsilon}\left({u}_0\left(\xi\right)-u^{\rm r}\left(\xi,t_0\right)\right)}-1\right)d\xi\vphantom{\left(k\left(0,0\right)+\frac{ab}{2\epsilon}\right)\left(e^{-\frac{a}{\epsilon}\tilde{u}(0,t)}-1\right)}\right)\nonumber\\
&&+u_x^{\rm r}\left(1,t_0\right)e^{\frac{a}{\epsilon}\left({u}_0\left(1\right)-u^{\rm r}\left(1,t_0\right)\right)}.
\end{eqnarray}

\begin{lemma}
\label{lemma 1thm2}
There exists a class $\mathcal{K}_{\infty}$ function $\alpha_1$ such that if $\tilde{u}\in H^1(0,1)$ then $\tilde{\bar{v}}\in H^1(0,1)$ and the following holds
\begin{eqnarray}
\|\tilde{\bar{v}}(t)\|_{H^1}\leq \alpha_1\left(\|\tilde{u}(t)\|_{H^1}\right).\label{lemrel1}
\end{eqnarray}
\end{lemma}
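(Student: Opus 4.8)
The plan is to establish the bound (\ref{lemrel1}) by direct estimation of the $L^2$ norms of $\tilde{\bar{v}}$ and $\tilde{\bar{v}}_x$ in terms of $\tilde{u}$ and $\tilde{u}_x$, exploiting the explicit algebraic form of the transformation (\ref{transformation1er}). First I would note that $\tilde{\bar{v}}(x,t)=e^{-\frac{a}{\epsilon}\tilde{u}(x,t)}-1$, and that since $H^1(0,1)\hookrightarrow C[0,1]$ by the Sobolev embedding in one dimension, $\|\tilde{u}(t)\|_{L^\infty}\leq C_s\|\tilde{u}(t)\|_{H^1}$ for some embedding constant $C_s$. Hence $|\tilde{\bar{v}}(x,t)|=|e^{-\frac{a}{\epsilon}\tilde{u}(x,t)}-1|\leq \frac{|a|}{\epsilon}|\tilde{u}(x,t)|e^{\frac{|a|}{\epsilon}|\tilde{u}(x,t)|}\leq \frac{|a|}{\epsilon}e^{\frac{|a|}{\epsilon}C_s\|\tilde{u}(t)\|_{H^1}}|\tilde{u}(x,t)|$, using the elementary inequality $|e^z-1|\leq |z|e^{|z|}$. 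Squaring and integrating over $x\in[0,1]$ immediately yields $\|\tilde{\bar{v}}(t)\|_{L^2}\leq \frac{|a|}{\epsilon}e^{\frac{|a|}{\epsilon}C_s\|\tilde{u}(t)\|_{H^1}}\|\tilde{u}(t)\|_{L^2}$.

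Next I would handle the spatial derivative. Differentiating (\ref{transformation1er}) gives $\tilde{\bar{v}}_x(x,t)=-\frac{a}{\epsilon}e^{-\frac{a}{\epsilon}\tilde{u}(x,t)}\tilde{u}_x(x,t)$, so $|\tilde{\bar{v}}_x(x,t)|\leq \frac{|a|}{\epsilon}e^{\frac{|a|}{\epsilon}|\tilde{u}(x,t)|}|\tilde{u}_x(x,t)|\leq \frac{|a|}{\epsilon}e^{\frac{|a|}{\epsilon}C_s\|\tilde{u}(t)\|_{H^1}}|\tilde{u}_x(x,t)|$. Squaring and integrating gives $\|\tilde{\bar{v}}_x(t)\|_{L^2}\leq \frac{|a|}{\epsilon}e^{\frac{|a|}{\epsilon}C_s\|\tilde{u}(t)\|_{H^1}}\|\tilde{u}_x(t)\|_{L^2}$. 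Adding the two bounds and using $\|\tilde{u}(t)\|_{L^2}+\|\tilde{u}_x(t)\|_{L^2}=\|\tilde{u}(t)\|_{H^1}$ yields
\begin{eqnarray}
\|\tilde{\bar{v}}(t)\|_{H^1}\leq \frac{|a|}{\epsilon}e^{\frac{|a|}{\epsilon}C_s\|\tilde{u}(t)\|_{H^1}}\|\tilde{u}(t)\|_{H^1}.\nonumber
\end{eqnarray}
One then sets $\alpha_1(r)=\frac{|a|}{\epsilon}r\,e^{\frac{|a|}{\epsilon}C_s r}$, which is continuous, strictly increasing, zero at $r=0$, and unbounded, hence of class $\mathcal{K}_\infty$, completing the proof. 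The fact that $\tilde{\bar{v}}\in H^1(0,1)$ whenever $\tilde{u}\in H^1(0,1)$ follows a posteriori from the finiteness of the above right-hand side together with the explicit smooth (indeed real-analytic) dependence of $\tilde{\bar{v}}$ and $\tilde{\bar{v}}_x$ on $\tilde{u}$ and $\tilde{u}_x$ pointwise.

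The only mild subtlety — and the place I would be most careful — is the Sobolev embedding step: in one spatial dimension $H^1(0,1)$ does embed continuously into $C[0,1]$, so the $L^\infty$ bound on $\tilde{u}$ in terms of $\|\tilde{u}\|_{H^1}$ is legitimate and gives the composition estimate for the nonlinearity $z\mapsto e^{-\frac{a}{\epsilon}z}-1$; one must make sure to state the embedding constant explicitly so that $\alpha_1$ is a genuine, fixed $\mathcal{K}_\infty$ function independent of $\tilde{u}$. There is no real obstacle here beyond bookkeeping, since the transformation is an explicit Nemytskii (composition) operator with a smooth generating function vanishing at the origin; the $\mathcal{K}_\infty$ structure is forced by that vanishing plus the exponential growth.
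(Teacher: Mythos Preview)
Your proof is correct and follows essentially the same route as the paper: the elementary bound $|e^{z}-1|\le |z|e^{|z|}$, the one-dimensional Sobolev embedding $H^1(0,1)\hookrightarrow C[0,1]$, and direct estimation of $\tilde{\bar v}$ and $\tilde{\bar v}_x$ in $L^2$. The only cosmetic differences are that the paper derives the embedding constant explicitly (obtaining $\sup_x|\tilde u|\le 2\|\tilde u\|_{H^1}$) and bounds $\|\tilde{\bar v}\|_{L^2}$ via the $L^\infty$ estimate alone rather than retaining a factor $\|\tilde u\|_{L^2}$, leading to $\alpha_1(s)=\hat\alpha(2s)+\frac{|a|}{\epsilon}e^{\frac{2|a|}{\epsilon}s}s$ instead of your single-term expression.
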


\begin{lemma}
\label{lemma 2thm2}
For all solutions of the system that satisfy (\ref{feasibility}) for some $0<c<1$, if $\tilde{\bar{v}}\in H^1(0,1)$ then $\tilde{u}\in H^1(0,1)$ and the following holds
\begin{eqnarray}
\|\tilde{u}(t)\|_{H^1}\leq \frac{\epsilon}{|a|\left(1-c\right)}\|\tilde{\bar{v}}(t)\|_{H^1}.\label{lemrel1n}
\end{eqnarray}
\end{lemma}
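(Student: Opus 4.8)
The plan is to work directly with the inverse transformation (\ref{transformationinv}), namely $\tilde{u}(x,t)=-\frac{\epsilon}{a}\ln(1+\tilde{\bar{v}}(x,t))$, and to establish the claimed inequality pointwise in $x$ before integrating. Since $\tilde{\bar{v}}(t)\in H^1(0,1)$ embeds continuously into $C[0,1]$, the feasibility condition (\ref{feasibility}) guarantees that $1+\tilde{\bar{v}}(x,t)$ stays in $(1-c,1+c)$ for every $x\in[0,1]$ (in fact in a compact subinterval of it, by continuity and compactness of $[0,1]$), so the logarithm in (\ref{transformationinv}) is well-defined, $C^1$ in its argument, and bounded away from the singularity at $-1$.

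First I would bound the $L^2$ part of the norm. Applying the mean value theorem to $s\mapsto\ln(1+s)$ on the segment joining $0$ and $\tilde{\bar{v}}(x,t)$, there is a point $\xi_x$ between them with $|\ln(1+\tilde{\bar{v}}(x,t))|=|\tilde{\bar{v}}(x,t)|/|1+\xi_x|\le|\tilde{\bar{v}}(x,t)|/(1-c)$, because $|\xi_x|\le|\tilde{\bar{v}}(x,t)|<c$. Hence $|\tilde{u}(x,t)|\le\frac{\epsilon}{|a|(1-c)}|\tilde{\bar{v}}(x,t)|$ for all $x$, and squaring and integrating over $[0,1]$ gives $\|\tilde{u}(t)\|_{L^2}\le\frac{\epsilon}{|a|(1-c)}\|\tilde{\bar{v}}(t)\|_{L^2}$; in particular $\tilde{u}(t)\in L^2(0,1)$.

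Next I would treat the derivative. Since $1+\tilde{\bar{v}}(t)$ is bounded and bounded away from zero and $\tilde{\bar{v}}(t)\in H^1(0,1)$, the composition $\tilde{u}(t)$ belongs to $H^1(0,1)$, with weak derivative given by the chain rule, $\tilde{u}_x(x,t)=-\frac{\epsilon}{a}\frac{\tilde{\bar{v}}_x(x,t)}{1+\tilde{\bar{v}}(x,t)}$. Using again $|1+\tilde{\bar{v}}(x,t)|>1-c$ yields $|\tilde{u}_x(x,t)|\le\frac{\epsilon}{|a|(1-c)}|\tilde{\bar{v}}_x(x,t)|$, and integrating gives $\|\tilde{u}_x(t)\|_{L^2}\le\frac{\epsilon}{|a|(1-c)}\|\tilde{\bar{v}}_x(t)\|_{L^2}$. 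Adding the two $L^2$ estimates, and recalling that by the convention fixed in the Notation section $\|\cdot\|_{H^1}$ is the sum $\|\cdot\|_{L^2}+\|(\cdot)_x\|_{L^2}$, we obtain $\|\tilde{u}(t)\|_{H^1}\le\frac{\epsilon}{|a|(1-c)}\|\tilde{\bar{v}}(t)\|_{H^1}$, which is the assertion.

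There is no genuine obstacle here; the only point requiring a little care is justifying that the chain rule holds in the weak (Sobolev) sense, which is a standard fact once one notes that $1+\tilde{\bar{v}}(t)$ is uniformly bounded away from zero on $[0,1]$ under (\ref{feasibility}) — and this is precisely where the feasibility condition, and hence the locality of the subsequent stability result, enters.
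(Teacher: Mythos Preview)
Your argument is correct and essentially identical to the paper's: both use the pointwise bound $|\ln(1+r)|\le|r|/(1-c)$ for $|r|<c$ (you obtain it via the mean value theorem, the paper simply states it) to control $\|\tilde u\|_{L^2}$, and then differentiate (\ref{transformationinv}) together with $|1+\tilde{\bar v}|>1-c$ to control $\|\tilde u_x\|_{L^2}$. Your additional remarks on the Sobolev embedding and the weak chain rule just make explicit what the paper leaves implicit.
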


\begin{lemma}
\label{lemma 3thm2}
Under the conditions of Theorem \ref{theorem 1} for the reference outputs, if $\tilde{\bar{v}}\in H^1(0,1)$ then $\tilde{v}\in H^1(0,1)$ and there exists a positive constant $\xi_1$ such that the following holds
\begin{eqnarray}
\|\tilde{v}(t)\|_{H^1}\leq \xi_1\|\tilde{\bar{v}}(t)\|_{H^1}.\label{lemrel1nne}
\end{eqnarray}
In reverse, if $\tilde{{v}}\in H^1(0,1)$ then $\tilde{\bar{v}}\in H^1(0,1)$ and there exists a positive constant $\xi_2$ such that the following holds
\begin{eqnarray}
\|\tilde{\bar{v}}(t)\|_{H^1}\leq \xi_2\|\tilde{{v}}(t)\|_{H^1}.\label{lemrel1nne1}
\end{eqnarray}
\end{lemma}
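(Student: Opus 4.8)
The plan is to exploit the explicit algebraic relation between $\tilde{v}$ and $\tilde{\bar{v}}$ given by the spatial multiplier in (\ref{transformation1add}), namely $\tilde{v}(x,t)=\phi(x,t)\tilde{\bar{v}}(x,t)$ with $\phi(x,t)=e^{-\frac{ab}{2\epsilon}x-\frac{a}{\epsilon}u^{\rm r}(x,t)}$. The whole lemma reduces to showing that $\phi$ and $\phi^{-1}=e^{\frac{ab}{2\epsilon}x+\frac{a}{\epsilon}u^{\rm r}(x,t)}$, together with their spatial derivatives $\phi_x$, $(\phi^{-1})_x$, are bounded uniformly in $x\in[0,1]$ and $t\ge t_0$; then the two $H^1$ equivalences follow from the product rule and the triangle inequality. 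First I would write $\phi_x = \left(-\frac{ab}{2\epsilon}-\frac{a}{\epsilon}u_x^{\rm r}(x,t)\right)\phi$, and similarly $(\phi^{-1})_x=\left(\frac{ab}{2\epsilon}+\frac{a}{\epsilon}u_x^{\rm r}(x,t)\right)\phi^{-1}$. So everything hinges on uniform bounds for $u^{\rm r}$ and $u_x^{\rm r}$.

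The key step is invoking the conclusion of Theorem~\ref{theorem 1}: under the stated hypotheses on the reference outputs, the generated trajectory satisfies condition (\ref{Res1}), $\sup_{x\in[0,1]}|v^{\rm r}(x,t)|<\bar{c}e^{-|\frac{ab}{2\epsilon}|}$ for all $t\ge t_0$ with $\bar{c}\in(0,1)$, and moreover $v_x^{\rm r}$ is uniformly bounded. From (\ref{166}), $u^{\rm r}(x,t)=-\frac{\epsilon}{a}\ln\left(e^{\frac{ab}{2\epsilon}x}v^{\rm r}(x,t)+1\right)$, and (\ref{Res1}) guarantees $e^{\frac{ab}{2\epsilon}x}v^{\rm r}(x,t)+1$ is bounded away from $0$ and from above, so $u^{\rm r}$ is uniformly bounded; hence there are constants $0<m\le\phi(x,t),\phi^{-1}(x,t)\le M$ uniform in $x,t$. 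Differentiating (\ref{166}) gives $u_x^{\rm r}=-\frac{\epsilon}{a}\frac{\frac{ab}{2\epsilon}e^{\frac{ab}{2\epsilon}x}v^{\rm r}+e^{\frac{ab}{2\epsilon}x}v_x^{\rm r}}{e^{\frac{ab}{2\epsilon}x}v^{\rm r}+1}$, whose numerator is uniformly bounded (by (\ref{Res1}) and boundedness of $v_x^{\rm r}$) and whose denominator is bounded away from zero, so $u_x^{\rm r}$ is uniformly bounded too; call the resulting bound on $\left|-\frac{ab}{2\epsilon}-\frac{a}{\epsilon}u_x^{\rm r}\right|$ by $N$. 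Then $\|\phi_x(t)\|_{\infty}\le MN$ and $\|(\phi^{-1})_x(t)\|_{\infty}\le MN$.

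With these bounds in hand the estimates are routine: from $\tilde{v}=\phi\tilde{\bar{v}}$ one gets pointwise $|\tilde{v}|\le M|\tilde{\bar{v}}|$ and $|\tilde{v}_x|\le M|\tilde{\bar{v}}_x|+MN|\tilde{\bar{v}}|$, so integrating over $[0,1]$ and using $\|\tilde{\bar{v}}\|_{L^2}\le\|\tilde{\bar{v}}\|_{H^1}$ yields (\ref{lemrel1nne}) with $\xi_1=M(1+N)$; reversing the roles of $\phi$ and $\phi^{-1}$ and of $\tilde{v},\tilde{\bar{v}}$ gives (\ref{lemrel1nne1}) with $\xi_2=M(1+N)$ as well (or the analogous constant from the $\phi^{-1}$ bounds). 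I would also remark that membership $\tilde{\bar{v}}\in H^1(0,1)\iff\tilde{v}\in H^1(0,1)$ is immediate once $\phi,\phi^{-1}$ are $C^1$ and bounded on $[0,1]$. The only genuine obstacle is the one already dispatched by Theorem~\ref{theorem 1}: establishing that $u^{\rm r}$ and $u_x^{\rm r}$ are uniformly bounded in both $x$ and $t$, which is why the lemma is stated under the conditions of Theorem~\ref{theorem 1}; everything after that is the product rule.
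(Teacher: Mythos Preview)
Your proposal is correct and follows essentially the same approach as the paper: both arguments write $\tilde v=\phi\,\tilde{\bar v}$ for the multiplier coming from (\ref{transformation1add}) and reduce the lemma to uniform-in-$(x,t)$ bounds on $\phi$, $\phi^{-1}$, $\phi_x$, $(\phi^{-1})_x$, which in turn follow from the boundedness of the reference trajectory established in Theorem~\ref{theorem 1}. The only cosmetic difference is that the paper expresses the multiplier directly in terms of $v^{\rm r}$ via (\ref{166}), namely $\phi(x,t)=e^{-\frac{ab}{2\epsilon}x}\bigl(e^{\frac{ab}{2\epsilon}x}v^{\rm r}(x,t)+1\bigr)$, so that (\ref{Res1}) and the boundedness of $v_x^{\rm r}$ apply immediately, whereas you phrase the bounds in terms of $u^{\rm r}$ and $u_x^{\rm r}$ and then invoke (\ref{166}); these are equivalent.
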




\begin{proof}[Proof of Theorem \ref{theorem 2}]
We start by considering the following backstepping transformation, which is introduced in \cite{Vazquez1} (see also Fig. \ref{inter})
\begin{eqnarray}
w(x,t)=\tilde{v}(x,t)-\int_{-x+1}^xk\left(x,\xi\right)\tilde{v}\left(\xi,t\right)d\xi,\label{trans1}
\end{eqnarray}
where $k$ is defined in (\ref{poo}).
Transformation (\ref{trans1}), together with the control laws (\ref{bac1}), (\ref{bac2}), map system (\ref{error1})--(\ref{error2}) to \cite{Vazquez1}\footnote{In order to become clear how the results from \cite{Vazquez1} are employed we provide additional details in Appendix C.} 
\begin{eqnarray}
w_t(x,t)&=&\epsilon {w}_{xx}(x,t)-\left(\frac{a^2b^2}{4\epsilon}+c_1\right)w(x,t)\label{sys0ervw}\\
w_x(0,t)&=&0\label{bgh}\\
w_x(1,t)&=&0,\label{sysnervw}
\end{eqnarray}
where $c_1>0$ is arbitrary. The backstepping transformation (\ref{trans1}) is invertible with inverse that may be expressed as
\begin{eqnarray}
\tilde{v}(x,t)={w}(x,t)+\int_{-x+1}^xl\left(x,\xi\right){w}\left(\xi,t\right)d\xi,\label{trans1zinvx}
\end{eqnarray}
which follows specializing the results in \cite{vazquez2} to the present case\footnote{To see this, note that a one-dimensional ball is, in fact, an interval and, its boundary, i.e., a ``zero-sphere" just consists of the two endpoints of the interval.}(see also the discussion in \cite{Vazquez1}), where\footnote{In terms of the $z=x-\frac{1}{2}$ and $y=\xi-\frac{1}{2}$ variables the inverse backstepping transformation (\ref{trans1zinvx}) can be written as $\tilde{v}_1(z,t)={w}_1(z,t)+\int_{-z}^zL\left(z,y\right){w}_1\left(y,t\right)dy$, where $w_1(z,t)=w\left(z+\frac{1}{2},t\right)$, $\tilde{v}_1(z,t)=\tilde{v}\left(z+\frac{1}{2},t\right)$, and $L(z,y)=l\left(z+\frac{1}{2},y+\frac{1}{2}\right)$.}
\begin{eqnarray}
l(x,\xi)&=&-\frac{1}{2}\sqrt{\frac{c_1}{\epsilon}}\frac{{\rm J}_{1}\left(\sqrt{\frac{c_1}{\epsilon}\left(\left(x-\frac{1}{2}\right)^2-\left(\xi-\frac{1}{2}\right)^2\right)}\right)}{\sqrt{\left(x-\frac{1}{2}\right)^2-\left(\xi-\frac{1}{2}\right)^2}}\nonumber\\
&&\times\left(x+\xi-1\right)\label{laz},
\end{eqnarray}
with ${\rm J_1}$ being the first-order Bessel function of the first kind. Having defined the backstepping transformation and its inverse it is shown, specializing the results in \cite{vazquez2} (Section 6.3), that there exist positive constants $m_1$ and $m_2$ such that
\begin{eqnarray}
\|w(t)\|_{H^1}&\leq& m_1\|\tilde{v}(t)\|_{H^1}\label{bound dir}\\
\|\tilde{v}(t)\|_{H^1}&\leq& m_2 \|w(t)\|_{H^1}.\label{bound in}
\end{eqnarray}
Defining the  Lyapunov functional 
\begin{eqnarray}
S_1(t)=\frac{1}{2}\int_0^1w(x,t)^2dx+\frac{1}{2}\int_0^1w_x(x,t)^2dx,
\end{eqnarray}
we get along the solutions of the ``target" system (\ref{sys0ervw})--(\ref{sysnervw}) that $\dot{S}_1(t)\leq -2\left(c_1+\frac{a^2b^2}{4\epsilon}\right) S_1(t)$, where we took the $L^2$-inner product of (\ref{sys0ervw}) with $w$, $w_{xx}$ and performed one step of integration by parts. Using (\ref{bound dir}), (\ref{bound in}), we get for all $t\geq t_0$
\begin{eqnarray}
\|\tilde{v}(t)\|_{H^1}&\leq& m_1m_2\sqrt{2} \|\tilde{v}\left(t_0\right)\|_{H^1}e^{ -\left(c_1+\frac{a^2b^2}{4\epsilon}\right)\left(t-t_0\right)},\label{bound0}
\end{eqnarray}
and hence, from Lemma \ref{lemma 3thm2}, we conclude that there exists a constant $\nu_4$ such that for all $t\geq t_0$
\begin{eqnarray}
\|\tilde{\bar{v}}(t)\|_{H^1}&\leq&\nu_4 \|\tilde{\bar{v}}\left(t_0\right)\|_{H^1}e^{ -\left(c_1+\frac{a^2b^2}{4\epsilon}\right)\left(t-t_0\right)}.\label{bound00}
\end{eqnarray}
From Lemma \ref{lemma 1thm2}, estimate (\ref{bound00}) implies that one can choose $\mu$ in (\ref{region}) sufficiently small, in fact, such that $\mu\leq\alpha_1^{-1}\left(\frac{c}{2\nu_4}\right)$, in order for relation $\|\tilde{\bar{v}}(t)\|_{H^1}<\frac{c}{2}$, for some $0<c<1$, to hold for all $t\geq t_0$. Hence, since $\sup_{x \in[0,1]}|\tilde{\bar{v}}(x,t)|\leq 2\|\tilde{\bar{v}}(t)\|_{H^1}$, for any $\tilde{v}\in H^1(0,1)$, we conclude that condition (\ref{feasibility}), for some $0<c<1$, is satisfied. Estimate (\ref{thm1est}) is then obtained, employing Lemma \ref{lemma 2thm2} and combining estimate (\ref{bound00}) with estimates (\ref{lemrel1}), (\ref{lemrel1n}), with $\beta\left(s,t-t_0\right)= \frac{\nu_4\epsilon}{|a|\left(1-c\right)}\alpha_1\left(s\right)e^{ -\left(c_1+\frac{a^2b^2}{4\epsilon}\right)\left(t-t_0\right)}$.

We study next the well-posedness of the closed-loop system. We start with the target system (\ref{sys0ervw})--(\ref{sysnervw}). Since from (\ref{poo}) we get that $k\in C^2\left(D\right)$, from transformation (\ref{trans1zinvx}) it follows, using the fact that $\tilde{v}_0\in H^2(0,1)$ (which follows, in a similar way to the derivation of estimates (\ref{lemrel1}), (\ref{lemrel1nne}), from (\ref{transformation1er}), (\ref{transformation1add}) exploiting the regularity assumption on $u_0$ and the regularity properties of the reference trajectory $u^{\rm r}\left(t_0\right)$ in Theorem \ref{theorem 1}) and the compatibility conditions, that $w_0\in H^2(0,1)$ satisfies the compatibility conditions $w'_0(0)=w'_0(1)=0$. Therefore, from (\ref{sys0ervw})--(\ref{sysnervw}) it is shown, see, for example, \cite{brezis}, that there exists a unique ${w}\in C\left([t_0,\infty);H^2(0,1)\right)$. The inverse transformation (\ref{trans1zinvx}) and the fact that $l\in C^2\left(D\right)$ (which follows from expression (\ref{laz})) guarantee the existence and uniqueness of $\tilde{v}\in C\left([t_0,\infty);H^2(0,1)\right)$. Using (\ref{transformation1er}), (\ref{transformation1add}) it follows that $\tilde{u}(x,t)=-\frac{\epsilon}{a}\textrm{log}\left(\tilde{{v}}(x,t)e^{\frac{ab}{2\epsilon}+\frac{a}{\epsilon}u^{\rm r}(x,t)}+1\right)$, and hence, in a similar way to the derivation of estimates (\ref{lemrel1n}), (\ref{lemrel1nne1}),  the regularity properties of $u^{\rm r}$ and condition (\ref{feasibility}) guarantee the existence and uniqueness of ${u}\in C\left([t_0,\infty);H^2(0,1)\right)$. Employing similar arguments, with \cite{brezis} (see also, e.g., \cite{lasi}) it is shown that $u\in C^{2,1}\left([0,1]\times \left(t_0,\infty\right)\right)$. \qed
\end{proof}

\section{Nonlinear Observer and Output Feedback Law Designs}
\label{sec observer}

In this section, we design a nonlinear observer to estimate the state $\tilde{u}(x)$, $x\in[0,1]$, which may be employed in the full-state feedback laws (\ref{con1}), (\ref{con2}) giving rise to an observer-based output-feedback design or, it may be utilized independently when the goal is only state estimation. The observer utilizes measurements from both ends of the spatial domain. Furthermore, we also present static, collocated output-feedback controllers, which, however, cannot achieve an arbitrary decay rate.



\subsection{Observer design}
Exploiting the convenient form of system (\ref{error1})--(\ref{error2}) we introduce the following observer 
\begin{eqnarray}
\hat{\tilde{v}}_t(x,t)&=&\epsilon \hat{\tilde{v}}_{xx}(x,t)-\frac{a^2b^2}{4\epsilon}\hat{\tilde{v}}(x,t)+p_2(x)\nonumber\\
&&\times \left(\left(e^{-\frac{a}{\epsilon}\tilde{u}(0,t)}-1\right)e^{-\frac{a}{\epsilon}u^{\rm r}(0,t)}-\hat{\tilde{v}}(0,t)\right)\nonumber\\
&&+p_1(x)\left(\left(e^{-\frac{a}{\epsilon}\tilde{u}(1,t)}-1\right)\right.\nonumber\\
&&\left.\times e^{-\frac{ab}{2\epsilon}-\frac{a}{\epsilon}u^{\rm r}(1,t)}-\hat{\tilde{v}}(1,t)\vphantom{\left(e^{-\frac{a}{\epsilon}\tilde{u}(1,t)}-1\right)}\right)\label{obs1}\\
\hat{\tilde{v}}_x(0,t)&=&\tilde{V}_0(t)+p_{00}\left(\left(e^{-\frac{a}{\epsilon}\tilde{u}(0,t)}-1\right)e^{-\frac{a}{\epsilon}u^{\rm r}(0,t)}\right.\nonumber\\
&&\left.-\hat{\tilde{v}}(0,t)\right)\\
\hat{\tilde{v}}_x(1,t)&=&\tilde{V}_1(t)+p_{11}\left(\left(e^{-\frac{a}{\epsilon}\tilde{u}(1,t)}-1\right)e^{-\frac{ab}{2\epsilon}-\frac{a}{\epsilon}u^{\rm r}(1,t)}\right.\nonumber\\
&&\left.-\hat{\tilde{v}}(1,t)\right).\label{obs2}
\end{eqnarray}
The gains $p_2(x)$, $p_1(x)$, $p_{00}$, and $p_{11}$ are designed via the backstepping methodology, specializing the results from \cite{vazquez2} to a one-dimensional spatial domain, as
\begin{eqnarray}
p_2(x)&=&-\epsilon P_{\xi}\left(x,0\right)\label{gain1}\\
p_1(x)&=&-\epsilon P_{\xi}\left(x,1\right)\label{gainp2}\\
p_{00}&=&-P\left(0,0\right)\\
p_{11}&=&-P\left(1,1\right),\label{gain2}
\end{eqnarray}
where the kernel $P$ is given explicitly, for $(x,\xi)$ in the domain $E=E_1\cup 	E_2$, where $E_1=\left\{\left(x,\xi\right):\frac{1}{2}\leq \xi\leq 1,\right.$ $\left.-\xi+1\leq x\leq \xi\right\}$ and $E_2=\left\{\left(x,\xi\right):0\leq \xi\leq \frac{1}{2},\right.\hphantom{dssdsdsd}$ $\left.\xi\leq x\leq 1-\xi\right\}$, by
\begin{eqnarray}
P\left(x,\xi\right)&=&-\frac{1}{2}\sqrt{\frac{c_2}{\epsilon}}\frac{{\rm I}_{1}\left(\sqrt{\frac{c_2}{\epsilon}\left(\left(\xi-\frac{1}{2}\right)^2-\left(x-\frac{1}{2}\right)^2\right)}\right)}{\sqrt{\left(\xi-\frac{1}{2}\right)^2-\left(x-\frac{1}{2}\right)^2}}\nonumber\\
&&\times\left(\xi+x-1\right)\label{gainR},
\end{eqnarray}
where ${\rm I}_1$ denotes the modified Bessel function of the first kind of first order and $c_2>0$ is arbitrary. 

Note that observer (\ref{obs1})--(\ref{obs2}) is a copy of the (linear) system (\ref{error1})--(\ref{error2}) plus output injection, where the output-injection terms are linear in the state $\tilde{v}$, which can be seen using relations (\ref{transformation1er}), (\ref{transformation1add}) for $x=0$ and $x=1$.




\subsection{Observer-based output feedback boundary control design}

In order to employ the full-state feedback laws (\ref{con1}), (\ref{con2}), which may achieve an arbitrary decay rate for the closed-loop system, utilizing only boundary measurements, we first modify the control laws (\ref{bac1}), (\ref{bac2}) as
\begin{eqnarray}
\tilde{V}_0(t)&=&k\left(0,0\right)\tilde{v}\left(0,t\right)-\int_{0}^1k_x\left(0,\xi\right)\hat{\tilde{v}}\left(\xi,t\right)d\xi\label{bac1ob}\\
\tilde{V}_1(t)&=&k\left(1,1\right)\tilde{v}\left(1,t\right)+\int_{0}^1k_x\left(1,\xi\right)\hat{\tilde{v}}\left(\xi,t\right)d\xi,\label{bac2ob}
\end{eqnarray}
and hence, the control laws (\ref{con1}), (\ref{con2}) now become
\begin{eqnarray}
{U}_0(t)&=&-\frac{\epsilon}{a} e^{\frac{a}{\epsilon}{\tilde{u}}(0,t)}\left(\left(k(0,0)+\frac{ab}{2\epsilon}\right)\left(e^{-\frac{a}{\epsilon}\tilde{u}(0,t)}-1\right)\right.\nonumber\\
&&\left.- e^{\frac{a}{\epsilon}{u^{\rm r}}(0,t)}\int_{0}^1k_x\left(0,\xi\right)\hat{\tilde{v}}\left(\xi,t\right)d\xi\vphantom{\left(k(0,0)+\frac{ab}{2\epsilon}\right)\left(e^{-\frac{a}{\epsilon}\tilde{u}(0,t)}-1\right)}\right)\nonumber\\
&&+U_0^{\rm r}(t)e^{\frac{a}{\epsilon}{\tilde{u}}(0,t)}\label{con1obs}\\
{U}_1(t)&=&-\frac{\epsilon}{a} e^{\frac{a}{\epsilon}{\tilde{u}}(1,t)}\left(\left(k\left(1,1\right)+\frac{ab}{2\epsilon}\right)\left(e^{-\frac{a}{\epsilon}\tilde{u}(1,t)}-1\right)\right.\nonumber\\
&&\left.+e^{\frac{ab}{2\epsilon}+\frac{a}{\epsilon}u^{\rm r}\left(1,t\right)}\int_{0}^1k_x\left(1,\xi\right)\hat{\tilde{v}}\left(\xi,t\right)d\xi\right)\nonumber\\
&&+U_1^{\rm r}(t)e^{\frac{a}{\epsilon}{\tilde{u}}(1,t)}.\label{con2obs}
\end{eqnarray}

\subsection{Static collocated output-feedback controllers}
Provided that $b\neq0$, the zero solution of system (\ref{error1})--(\ref{error2}) is asymptotically stable when $\tilde{V}_0(t)=\tilde{V}_1(t)=0$ for all $t\geq t_0$ (and hence, so is the zero solution of system (\ref{sys0erver})--(\ref{sysnerver}) provided that $u^{\rm r}$ is uniformly bounded), i.e., when 
\begin{eqnarray}
{U}_0(t)&=&\frac{b}{2}\left(e^{\frac{a}{\epsilon}\tilde{u}(0,t)}-1\right)+U_0^{\rm r}(t)e^{\frac{a}{\epsilon}\tilde{u}(0,t)}\label{static1}\\
{U}_1(t)&=&\frac{b}{2}\left(e^{\frac{a}{\epsilon}\tilde{u}(1,t)}-1\right)+U_1^{\rm r}(t)e^{\frac{a}{\epsilon}\tilde{u}(1,t)}, \label{static2}
\end{eqnarray}
which may be viewed as decentralized (in the sense that each controller requires measurements of the state at the same boundary), static output-feedback control laws. However, the convergence rate to the zero equilibrium of the closed-loop solution of system (\ref{error1})--(\ref{error2}) under the control laws (\ref{static1}), (\ref{static2}) is not arbitrary (in contrast to the achievable decay rate under the full-state feedback laws (\ref{con1}), (\ref{con2}), which is arbitrary), but depends on the parameters of the system, namely $a$, $b$, and $\epsilon$.

\section{Trajectory Tracking Under Observer-Based Output Feedback}
\label{sec observer sta}

We next state and prove the following stability result for the closed-loop system, under the observer-based output feedback law.
\begin{theorem}
\label{thm3}
Consider a closed-loop system consisting of system (\ref{sys1})--(\ref{sys3}), the control laws (\ref{con1obs}), (\ref{con2obs}), and the observer (\ref{obs1})--(\ref{obs2}) with (\ref{bac1ob}), (\ref{bac2ob}). Under the conditions of Theorem \ref{theorem 1} for the reference outputs, there exist a positive constant $\mu^*$ and a class $\mathcal{KL}$ function $\beta^*$ such that for all initial conditions $\left({u}_0,\hat{\tilde{v}}_0\right)\in H^2\left(0,1\right)\times H^2\left(0,1\right)$ which are compatible with the control laws (\ref{bac1ob})--(\ref{con2obs}) and which satisfy 
\begin{eqnarray}
\|\tilde{u}\left(t_0\right)\|_{H^1}+\|\hat{\tilde{v}}\left(t_0\right)\|_{H^1}&<&\mu^*\label{region1},
\end{eqnarray}
the following holds
\begin{eqnarray}
\Omega(t)&\leq&\beta^*\left(\Omega\left(t_0\right),t-t_0\right),\quad\mbox{for all $t\geq t_0$}\label{tg}\\
\Omega(t)&=&\|\tilde{u}\left(t\right)\|_{H^1}+\|\hat{\tilde{v}}\left(t\right)\|_{H^1}.
\end{eqnarray}
Moreover, the closed-loop system has a unique solution $u,\hat{\tilde{v}}\in C\left([t_0,\infty);H^2(0,1)\right)$ with $u,\hat{\tilde{v}}\in C^{2,1}\left([0,1]\times \left(t_0,\infty\right)\right)$.
\end{theorem}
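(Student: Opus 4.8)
The plan is to combine the full-state feedback stability analysis of Theorem \ref{theorem 2} with a standard cascade argument for the observer error, working in the linearized coordinates. First I would introduce the observer error $\hat{\tilde v}(x,t)-\tilde v(x,t)$; call it $\tilde e(x,t)$, where $\tilde v$ is the linearizing state defined in (\ref{transformation1add}). Subtracting (\ref{error1})--(\ref{error2}) from (\ref{obs1})--(\ref{obs2}) and using the fact that, via (\ref{transformation1er}), (\ref{transformation1add}) evaluated at $x=0$ and $x=1$, the output-injection terms are exactly linear in $\tilde v(0,t)$ and $\tilde v(1,t)$, I would obtain an \emph{autonomous linear} error PDE
\begin{eqnarray}
\tilde e_t(x,t)&=&\epsilon\tilde e_{xx}(x,t)-\tfrac{a^2b^2}{4\epsilon}\tilde e(x,t)-p_2(x)\tilde e(0,t)-p_1(x)\tilde e(1,t)\nonumber\\
\tilde e_x(0,t)&=&-p_{00}\tilde e(0,t),\qquad \tilde e_x(1,t)=-p_{11}\tilde e(1,t).\nonumber
\end{eqnarray}
Then I would apply the backstepping observer transformation $\tilde e(x,t)=\tilde\varepsilon(x,t)-\int_{-x+1}^x P(x,\xi)\tilde\varepsilon(\xi,t)\,d\xi$ (this is the one-dimensional specialization of \cite{vazquez2}, consistent with the gain choices (\ref{gain1})--(\ref{gain2}) and kernel (\ref{gainR})), which maps the error system to the exponentially stable target $\tilde\varepsilon_t=\epsilon\tilde\varepsilon_{xx}-(\tfrac{a^2b^2}{4\epsilon}+c_2)\tilde\varepsilon$ with homogeneous Neumann boundary conditions. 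A Lyapunov functional $S_2=\tfrac12\|\tilde\varepsilon\|_{L^2}^2+\tfrac12\|\tilde\varepsilon_x\|_{L^2}^2$ gives $\dot S_2\le-2(c_2+\tfrac{a^2b^2}{4\epsilon})S_2$, and since $P,\,\text{(inverse kernel)}\in C^2$ I get $H^1$-norm equivalence between $\tilde e$ and $\tilde\varepsilon$, hence exponential decay of $\|\tilde e(t)\|_{H^1}$ at rate $c_2+\tfrac{a^2b^2}{4\epsilon}$, \emph{uniformly} and with linear gain in $\|\tilde e(t_0)\|_{H^1}$ — so this subsystem is actually globally exponentially stable in the $\tilde v$-coordinates.

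Next I would analyze the $\tilde v$-subsystem under the certainty-equivalence laws (\ref{bac1ob}), (\ref{bac2ob}). Writing $\hat{\tilde v}=\tilde v+\tilde e$ in the backstepping transformation (\ref{trans1}) with the observer output injection, the target system (\ref{sys0ervw})--(\ref{sysnervw}) picks up an additive forcing that is linear in $\tilde e$ (an integral term $\int_0^1 k_x(x,\xi)\tilde e(\xi,t)\,d\xi$ type perturbation entering through the boundary conditions / transformation). Using the same Lyapunov functional $S_1$ as in Theorem \ref{theorem 2}, the derivative now reads $\dot S_1\le-2(c_1+\tfrac{a^2b^2}{4\epsilon})S_1+\kappa\sqrt{S_1}\,\|\tilde e(t)\|_{H^1}$ for some constant $\kappa$; combined with the exponential decay of $\|\tilde e\|_{H^1}$ and a comparison/Gr\"onwall argument this yields $\|w(t)\|_{H^1}\le C(\|w(t_0)\|_{H^1}+\|\tilde e(t_0)\|_{H^1})e^{-\lambda(t-t_0)}$ for suitable $C,\lambda>0$, hence by the norm equivalences (\ref{bound dir}), (\ref{bound in}) the same for $\|\tilde v(t)\|_{H^1}$, and then via Lemma \ref{lemma 3thm2} for $\|\tilde{\bar v}(t)\|_{H^1}$.

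Then I would run the feasibility/invertibility argument exactly as in Theorem \ref{theorem 2}: choose $\mu^*$ small enough — using $\|\tilde{\bar v}(t_0)\|_{H^1}\le\alpha_1(\|\tilde u(t_0)\|_{H^1})$ (Lemma \ref{lemma 1thm2}), the equivalences in Lemma \ref{lemma 3thm2}, the embedding $\sup_x|\tilde{\bar v}(x,t)|\le 2\|\tilde{\bar v}(t)\|_{H^1}$, and the bound just derived — so that $\sup_x|\tilde{\bar v}(x,t)|<c$ for all $t\ge t_0$ and some $c\in(0,1)$; this validates (\ref{feasibility}) and hence the inversion (\ref{transformationinv}). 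Lemma \ref{lemma 2thm2} then converts the $\tilde{\bar v}$-bound back into the desired estimate $\|\tilde u(t)\|_{H^1}\le\beta^*(\Omega(t_0),t-t_0)$; adding $\|\hat{\tilde v}(t)\|_{H^1}\le\|\tilde v(t)\|_{H^1}+\|\tilde e(t)\|_{H^1}$ and re-expressing everything in terms of $\Omega(t_0)$ gives (\ref{tg}), with $\beta^*$ of the form $\text{const}\cdot\alpha_1(\cdot)\,e^{-\lambda(t-t_0)}$ up to the usual $\mathcal{KL}$ bookkeeping (possibly absorbing a polynomial-in-$t$ factor from the cascade into a slightly smaller exponent). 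Well-posedness follows as in Theorem \ref{theorem 2}: $w_0,\tilde\varepsilon_0\in H^2$ with the homogeneous Neumann compatibility conditions (inherited from the stated compatibility of $u_0,\hat{\tilde v}_0$ through the $C^2$ kernels $k,P$ and transformations (\ref{transformation1er}), (\ref{transformation1add})), classical parabolic theory \cite{brezis} for the two decoupled target equations, then pull back through the $C^2$ inverse transformations and the (now validated) logarithmic inversion to get $u,\hat{\tilde v}\in C([t_0,\infty);H^2(0,1))\cap C^{2,1}([0,1]\times(t_0,\infty))$.

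The main obstacle is the cascade estimate for the $\tilde v$-subsystem: unlike in Theorem \ref{theorem 2}, the target system is no longer homogeneous, so one must carefully track how the observer error enters (through (\ref{bac1ob})--(\ref{bac2ob}) and the boundary terms of transformation (\ref{trans1})) and show the perturbation is genuinely $H^1$-bounded by $\|\tilde e\|_{H^1}$ with a constant independent of the solution — and then close the small-gain/Gr\"onwall loop while simultaneously maintaining the feasibility condition (\ref{feasibility}), since the latter is what makes the whole nonlinear inversion legitimate and is only available \emph{a posteriori} once $\mu^*$ is fixed. This is a slightly delicate bootstrap, but structurally identical to the one already carried out for Theorem \ref{theorem 2}, now with one extra (globally stable, linearly bounded) state in the chain.
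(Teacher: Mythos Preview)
Your proposal is correct and follows the same overall architecture as the paper: the observer error system is autonomous and linear, backstep it to an exponentially stable target; set up a cascade with a second driven system; then close the nonlinear inversion via the feasibility bootstrap (\ref{feasibility}) and Lemmas~\ref{lemma 1thm2}--\ref{lemma 3thm2}. The one structural difference is the choice of driven variable in the cascade. You backstep the \emph{plant} state $\tilde v$ and treat the controller mismatch $\int_0^1 k_x(\cdot,\xi)\tilde e(\xi,t)\,d\xi$ as boundary forcing on the target $w$, whereas the paper backsteps the \emph{observer} state $\hat{\tilde v}$ to a target $\hat w_1$ that is forced (both in the interior and at the boundary) only by the two scalar signals $\bar w\bigl(\pm\tfrac12,t\bigr)$, because the output-injection terms in (\ref{obs1})--(\ref{obs2}) combine with the transformation to produce exactly this structure; see (\ref{sys0ervwhat})--(\ref{sysnervwhat}). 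The paper's decomposition lets it invoke the cascade $H^1$-stability result of \cite{andrey observer} directly rather than carrying out the Lyapunov/Gr\"onwall estimate you sketch; your $(\tilde e,\tilde v)$ decomposition is the more familiar certainty-equivalence picture and recovers $\hat{\tilde v}=\tilde v+\tilde e$ at the end, but forces you to handle non-homogeneous Neumann data in the $H^1$-Lyapunov computation (your claimed inequality $\dot S_1\le -2(\cdots)S_1+\kappa\sqrt{S_1}\|\tilde e\|_{H^1}$ has the right shape, though the boundary contributions arising from $\int w_x w_{xt}\,dx$ need a trace argument you do not spell out). Both routes are equivalent; your treatment of the feasibility bootstrap and the well-posedness matches the paper's, modulo the minor slip that the driven target is not ``decoupled'' but linear with known exponentially decaying forcing.
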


\begin{proof}[Proof of Theorem \ref{thm3}] The proof is divided into three parts. 

{\textbf{Part 1}}: {\em Backstepping transformation of the state estimation error}


We start defining the state estimation error 
\begin{eqnarray}
e=\tilde{v}-\hat{\tilde{v}}.\label{deferror}
\end{eqnarray}
Using relations (\ref{error1})--(\ref{error2}) for the $\tilde{v}$ system and relations (\ref{obs1})--(\ref{obs2}) for the observer, we get with equations (\ref{transformation1er}), (\ref{transformation1add}) for $x=0$ and $x=1$ that the state estimation error $e$ satisfies the PDE $e_t(x,t)=\epsilon {e}_{xx}(x,t)-\frac{a^2b^2}{4\epsilon}e(x,t)-p_2(x)e(0,t)-p_1(x)e(1,t)$ with boundary conditions $e_x(0,t)=-p_{00}e(0,t)$ and $e_x(1,t)=-p_{11}e(1,t)$. Since it turns out to be convenient to shift from the variable $x$ to the variable $z=x-\frac{1}{2}$ (in order to make the connection with the results from \cite{vazquez2} more clear), we re-write the error system as
\begin{eqnarray}
\bar{e}_t(z,t)&=&\epsilon \bar{e}_{zz}(z,t)-\frac{a^2b^2}{4\epsilon}\bar{e}(z,t)\nonumber\\
&&-\bar{p}_2(z)\bar{e}\left(-\frac{1}{2},t\right)-\bar{p}_1(z)\bar{e}\left(\frac{1}{2},t\right)\label{esys1z}\\
\bar{e}_z\left(-\frac{1}{2},t\right)&=&-p_{00}\bar{e}\left(-\frac{1}{2},t\right)\label{e sys lot}\\
\bar{e}_z\left(\frac{1}{2},t\right)&=&-p_{11}\bar{e}\left(\frac{1}{2},t\right),\label{esys2z}
\end{eqnarray}
where we define $\bar{e}(z,t)=e\left(z+\frac{1}{2}\right)$ as well as $\bar{p}_2(z)=p_2\left(z+\frac{1}{2}\right)$ and $\bar{p}_1(z)=p_1\left(z+\frac{1}{2}\right)$. Consider the following transformation, which is derived specializing the result from \cite{vazquez2} (Section 5) to the case of a one-dimensional spatial domain
\begin{eqnarray}
\bar{e}(z,t)&=&\bar{w}(z,t)-\int_z^{\frac{1}{2}}p\left(z,y\right)\bar{w}\left(y,t\right)dy\nonumber\\
&&+\int_{-\frac{1}{2}}^{-z}p\left(z,y\right)\bar{w}\left(y,t\right)dy,\quad 0\leq z\leq \frac{1}{2}\label{r1}\\
\bar{e}(z,t)&=&\bar{w}(z,t)+\int_{-\frac{1}{2}}^{z}p\left(z,y\right)\bar{w}\left(y,t\right)dy\nonumber\\
&&-\int_{-z}^{\frac{1}{2}}p\left(z,y\right)\bar{w}\left(y,t\right)dy,\quad -\frac{1}{2}\leq z\leq 0,\label{r2}
\end{eqnarray}
where
\begin{eqnarray}
p\left(z,y\right)=P\left(z+\frac{1}{2},y+\frac{1}{2}\right)\label{pp},
\end{eqnarray}
and the kernel $P$ is defined in (\ref{gainR}). From \cite{vazquez2} (Section 5) it follows\footnote{For the reader's benefit, we provide some further explanations in Appendix D, which are given, specifically, for the case of a one-dimensional spatial domain.} that transformation (\ref{r1}), (\ref{r2}) maps the following system into (\ref{esys1z})--(\ref{esys2z}) 
\begin{eqnarray}
\bar{w}_t(z,t)&=&\epsilon \bar{w}_{zz}(z,t)-\left(\frac{a^2b^2}{4\epsilon}+c_2\right)\bar{w}(z,t)\label{z1}\\
\bar{w}_z\left(-\frac{1}{2},t\right)&=&0\label{zn lot}\\
\bar{w}_z\left(\frac{1}{2},t\right)&=&0.\label{zn}
\end{eqnarray}
Moreover, transformation (\ref{r1}), (\ref{r2}) is invertible and its inverse may be expressed, specializing the results from \cite{vazquez2}, as 
\begin{eqnarray}
\bar{w}(z,t)&=&\bar{e}(z,t)+\int_z^{\frac{1}{2}}\bar{p}\left(z,y\right)\bar{e}\left(y,t\right)dy\nonumber\\
&&-\int_{-\frac{1}{2}}^{-z}\bar{p}\left(z,y\right)\bar{e}\left(y,t\right)dy,\quad 0\leq z\leq \frac{1}{2}\label{r1in}\\
\bar{w}(z,t)&=&\bar{e}(z,t)-\int_{-\frac{1}{2}}^{z}\bar{p}\left(z,y\right)\bar{e}\left(y,t\right)dy\nonumber\\
&&+\int_{-z}^{\frac{1}{2}}\bar{p}\left(z,y\right)\bar{e}\left(y,t\right)dy,\quad -\frac{1}{2}\leq z\leq 0,\label{r2in}
\end{eqnarray}
where the kernel $\bar{p}(z,y)$ has a very similar structure to $l\left(z+\frac{1}{2},y+\frac{1}{2}\right)$ in (\ref{laz}). 


Having defined the direct and inverse backstepping transformations for the state estimation error, it can be shown, utilizing the results from \cite{vazquez2} (Section 6, where almost identical arguments to the proofs of estimates (\ref{bound dir}), (\ref{bound in}) in the proof of Theorem \ref{theorem 2} are employed), that there exist positive constants $m_3$ and $m_4$ such that 
\begin{eqnarray}
\|\bar{w}(t)\|_{H^1}&\leq& m_3\|\bar{e}(t)\|_{H^1}\label{bound dirobs}\\
\|\bar{e}(t)\|_{H^1}&\leq& m_4 \|\bar{w}(t)\|_{H^1}.\label{bound inobs}
\end{eqnarray}

{\textbf{Part 2}}: {\em Backstepping transformation of the observer state}

Consider the transformation 
\begin{eqnarray}
\hat{w}_1(z,t)=\hat{\tilde{v}}_1(z,t)-\int_{-z}^zK\left(z,y\right)\hat{\tilde{v}}_1\left(y,t\right)dy,\label{trans1ob}
\end{eqnarray}
where $\hat{\tilde{v}}_1(z,t)=\hat{\tilde{v}}\left(z+\frac{1}{2},t\right)$ and $K(z,y)=k\left(z+\frac{1}{2},y+\frac{1}{2}\right)$, with $k$ being defined in (\ref{poo}). Following \cite{vazquez2} (see also the discussion in \cite{Vazquez1}), it is shown that the inverse of transformation (\ref{trans1ob}) is defined, similarly to the case of transformation (\ref{trans1}), as
\begin{eqnarray}
\hat{\tilde{v}}_1(z,t)=\hat{w}_1(z,t)+\int_{-z}^zL\left(z,y\right)\hat{w}_1\left(y,t\right)dy,\label{trans1zinvzzob}
\end{eqnarray} 
where $L(z,y)=l\left(z+\frac{1}{2},y+\frac{1}{2}\right)$, with $l$ being given in (\ref{laz}). Noting that the variable $\hat{\tilde{v}}_1$ satisfies the same PDE system with the variable $\hat{\tilde{v}}$, i.e., system (\ref{obs1})--(\ref{obs2}), with the difference that the variable $x$ is shifted to $z=x-\frac{1}{2}$, one can conclude that transformation (\ref{trans1ob}) together with the control laws (\ref{bac1ob}), (\ref{bac2ob}) map the $\hat{\tilde{v}}_1$ system to
\begin{eqnarray}
\hat{w}_{1_t}(z,t)&=&\epsilon \hat{w}_{1_{zz}}(z,t)-\left(\frac{a^2b^2}{4\epsilon}+c_1\right)\hat{w}_1(z,t)\nonumber\\
&&+\left(\bar{p}_2(z)-\int_{-z}^zK\left(z,y\right)\bar{p}_2\left(y\right)dy\right)\nonumber\\
&&\times\bar{w}\left(-\frac{1}{2},t\right)+\left(\vphantom{\int_{-z}^zK\left(z,y\right)\bar{p}_1\left(y\right)dy}\bar{p}_1(z)\right.\nonumber\\
&&\left.-\int_{-z}^zK\left(z,y\right)\bar{p}_1\left(y\right)dy\right)\bar{w}\left(\frac{1}{2},t\right)\label{sys0ervwhat}\\
\hat{w}_{1_z}\left(-\frac{1}{2},t\right)&=&\left(k(0,0)+p_{00}\right)\bar{w}\left(-\frac{1}{2},t\right)\label{bghhat}\\
\hat{w}_{1_z}\left(\frac{1}{2},t\right)&=&\left(k(1,1)+p_{11}\right)\bar{w}\left(\frac{1}{2},t\right),\label{sysnervwhat}
\end{eqnarray}
where we also used the facts that $\bar{e}\left(\frac{1}{2},t\right)=\bar{w}\left(\frac{1}{2},t\right)$ and $\bar{e}\left(-\frac{1}{2},t\right)=\bar{w}\left(-\frac{1}{2},t\right)$, which follow from (\ref{r1}) and (\ref{r2}), respectively. From transformations (\ref{trans1ob}), (\ref{trans1zinvzzob}), employing identical arguments to the corresponding arguments within the proof of Theorem \ref{theorem 2} that led to estimates (\ref{bound dir}), (\ref{bound in}) (see also \cite{vazquez2}), it follows that there exist positive constants $m_5$ and $m_6$ such that
\begin{eqnarray}
\|\hat{w}_1(t)\|_{H^1}&\leq& m_5\|\hat{\tilde{v}}_1(t)\|_{H^1}\label{bound dirob12}\\
\|\hat{\tilde{v}}_1(t)\|_{H^1}&\leq& m_6 \|\hat{w}_1(t)\|_{H^1}.\label{bound inob13}
\end{eqnarray}


{\textbf{Part 3}}: {\em Stability estimates and well-posedness}

The $\left(\bar{w},\hat{w}_1\right)$ system is a cascade in which, the homogenous part of both subsystems is an exponentially stable (also in the $H^1$ norm) heat equation and the non-autonomous part, i.e., the $\hat{w}_1$ subsystem, is driven by the autonomous $\bar{w}$ subsystem. Therefore, employing similar arguments to the proof of Theorem 5 in \cite{andrey observer} (see also, e.g., \cite{deutcher}, \cite{deutcher0}, \cite{vazquez0}, \cite{vazquez2}) one can conclude that the $\left(\bar{w},\hat{w}_1\right)$ system is exponentially stable in the $H^1$ norm, and hence, so is system $\left(\bar{e},\hat{\tilde{v}}_1\right)$ (based on estimates (\ref{bound dirobs}), (\ref{bound inobs}), (\ref{bound dirob12}), and (\ref{bound inob13})). Thus,
\begin{eqnarray}
\|\hat{\tilde{v}}(t)\|_{H^1}+\|e(t)\|_{H^1}&\leq& \bar{\nu}\left(\|\hat{\tilde{v}}(t_0)\|_{H^1}+\|e(t_0)\|_{H^1}\right)\nonumber\\
&&\times e^{ -\bar{\mu} \left(t-t_0\right)}, \quad \mbox{for all $t\geq t_0$},\label{bound0023}
\end{eqnarray}
for some positive constants $\bar{\nu}$ and $\bar{\mu}$. Therefore, with definition (\ref{deferror}) and employing Lemma \ref{lemma 3thm2} we arrive at
\begin{eqnarray}
\|\hat{\tilde{v}}(t)\|_{H^1}+\|\tilde{\bar{v}}(t)\|_{H^1}&\leq& \bar{\nu}_1\left(\|\hat{\tilde{{v}}}(t_0)\|_{H^1}+\|\tilde{\bar{v}}(t_0)\|_{H^1}\right)\nonumber\\
&&\times e^{ -\bar{\mu} \left(t-t_0\right)}, \quad \mbox{for all $t\geq t_0$},\label{bound0053}
\end{eqnarray}
for some positive constant $\bar{\nu}_1$. From Lemma \ref{lemma 1thm2} (relation (\ref{lemrel1})) we conclude that 
\begin{eqnarray}
\|\hat{\tilde{v}}(t)\|_{H^1}+\|\tilde{\bar{v}}(t)\|_{H^1}&\leq& \rho\left(\|\hat{\tilde{{v}}}(t_0)\|_{H^1}+\|\tilde{u}(t_0)\|_{H^1}\right)\nonumber\\
&&\times e^{ -\bar{\mu} \left(t-t_0\right)}, \quad \mbox{for all $t\geq t_0$},\label{bound00531}
\end{eqnarray}
where the class $\mathcal{K}_{\infty}$ function $\rho$ is given by $\rho(s)=\bar{\nu}_1 s+\bar{\nu}_1\alpha_1(s)$. Since $\sup_{x \in[0,1]}|\theta(x,t)|\leq 2\|\theta(t)\|_{H^1}$, for any $\theta\in H^1(0,1)$, choosing any positive constant $\mu^*$ such that $\mu^*\leq{\rho}^{-1}\left(\frac{c}{2}\right)$, for some $0<c<1$, we get that (\ref{feasibility}) holds. Thus, using Lemma \ref{lemma 2thm2} (relation (\ref{lemrel1n})) we get (\ref{tg}).


Similarly to Theorem \ref{theorem 2}, due to the regularity properties of the control and observer kernels, the well-posedness of the closed-loop system is studied using the $\left(\bar{w},\hat{w}_1\right)$ system (\ref{z1})--(\ref{zn}), (\ref{sys0ervwhat})--(\ref{sysnervwhat}), with initial condition $\left(\bar{w}_0,\hat{w}_{1_0}\right)\in H^2\left(-\frac{1}{2},\frac{1}{2}\right)\times H^2\left(-\frac{1}{2},\frac{1}{2}\right)$, which satisfies the compatibility conditions. Well-posedness of the $\left(\bar{w},\hat{w}_1\right)$ system may be established with, e.g., \cite{brezis}, following the arguments employed in, e.g., \cite{vazquez2}, (see also \cite{andrey observer}, \cite{vazquez0}) and exploiting the cascade form of $\left(\bar{w},\hat{w}_1\right)$ together with the regularity of $\bar{w}$.\qed
\end{proof}

\section{Application to Traffic Flow Control}
\label{traffic app}
\subsection{Model description}
Consider a highway stretch with inlet at $x=0$ and outlet at $x=1$. We model the traffic density dynamics within the stretch with a conservation law PDE. In order to account for drivers' look-ahead ability, we incorporate in the expression for the traffic flow, in addition to the term that corresponds to a conventional fundamental diagram relation between speed and density of vehicles, an additional term that depends on the spatial derivative of the traffic density, giving rise to the following model, see, e.g., \cite{pachroo}, \cite{treiber}
\begin{eqnarray}
{\rho}_t(x,t)+\left(\rho(x,t)V\left(\rho(x,t)\right)-\epsilon \rho_x(x,t)\right)_x&=&0\label{sys1tr}\\
\rho(0,t)&=&-U_0(t)\label{sys2tr}\\
\rho\left(1,t\right)&=&-U_1(t),\label{sys3tr}
\end{eqnarray}
where, for Greenshield's fundamental diagram \cite{greenshields} we have
\begin{eqnarray}
V\left(\rho\right)=a\left(b-\rho\right),
\end{eqnarray}
with $a$, $b$ being free-flow speed and maximum density, respectively, whereas $\rho$ denotes the traffic density. The density at the boundaries may be imposed manipulating either the flow or the speed of vehicles, via the employment of ramp-metering (RM) and variable speed limits (VSL), as well as exploiting the capabilities of connected and automated vehicles see, e.g., \cite{carlson}, \cite{roncoli}.

In order to bring model (\ref{sys1tr})--(\ref{sys3tr}) into the form (\ref{sys1})--(\ref{sys3}) we define the following variable
\begin{eqnarray}
u(x,t)&\!=\!&\int_x^1\rho(y,t)dy+\int_0^tQ\left(\rho(1,s),\rho_x(1,s)\right)ds\label{transform tr}\\
Q\left(\rho,\rho_x\right)&\!=\!&\rho V\left(\rho\right)-\epsilon \rho_x.\label{flow 1}
\end{eqnarray}
It can be shown, by direct differentiation of (\ref{transform tr}) with respect to $t$ and $x$, and by employing (\ref{sys1tr}), that the variable $u$ satisfies (\ref{sys1})--(\ref{sys3}). The state $u$ represents the so-called Moskowitz function, which constitutes an alternative macroscopic description of the dynamics of traffic flow in a highway. In particular, the value of the Moskowitz function $M=u(x,t)$ is interpreted as the ``label" of a given vehicle at position $x$ at time $t$, along a road segment \cite{bayen1}, \cite{newell}. 

\subsection{Design and motivation of the feedforward/feedback control laws}
A typical aim of a traffic control scheme is to regulate the outlet flow to a certain set-point, say $q^*$, which may be the point that achieves the maximum flow (capacity flow) \cite{carlson}. In terms of the $u$ variable this corresponds to $u(1,t)$ tracking the reference trajectory $q^* t$. This motivates the trajectory generation and tracking problems for the class of systems described by (\ref{sys1})--(\ref{sys3}). Moreover, since the value $u_x(1,t)$ could be also assigned, one may choose for reference value of $-u_x(1,t)$ the value of the density that corresponds to the critical density (i.e., the density at which capacity flow is achieved) of the nominal fundamental diagram relation (i.e., when there is no $\rho_x$ term in (\ref{flow 1})) between flow and density at the outlet of the considered stretch, which in turn would guarantee that the obtained desired profile for $u_x$ (or, for $\rho$) is uniform with respect to space. Setting $a=b=1$, we obtain $y_1^{\rm r}(t)=\frac{1}{4}t$ and $y_2^{\rm r}(t)=-\frac{1}{2}$. Relations (\ref{ref12})--(\ref{y2v}) are then written for $x_0=1$ as
\begin{eqnarray}
v^{\rm r}(x,t)&=&\sum_{k=0}^{\infty}\frac{1}{\epsilon^k} \frac{\left(x-1\right)^{2k}}{2k!}\sum_{m=0}^{k}\binom {k} {m}\left(\frac{1}{4\epsilon}\right)^{k-m}\nonumber\\
&&\times{y_{1,v}^{\rm r}}^{(m)}(t)+\sum_{k=0}^{\infty}\frac{1}{\epsilon^{k}} \frac{\left(x-1\right)^{2k+1}}{\left(2k+1\right)!}\nonumber\\
&&\times\sum_{m=0}^{k}\binom {k} {m}\left(\frac{1}{4\epsilon}\right)^{k-m}{y_{2,v}^{\rm r}}^{(m)}(t)\label{ref1211}\\
y_{1,v}^{\rm r}(t)&=&e^{-\frac{1}{2\epsilon}}\left(e^{-\frac{1}{4\epsilon}t}-1\right)\label{y1v11}\\
y_{2,v}^{\rm r}(t)&=&\frac{1}{2\epsilon}e^{-\frac{1}{2\epsilon}},\label{y2v11}
\end{eqnarray}
and hence,
\begin{eqnarray}
v^{\rm r}(x,t)&=&e^{-\frac{1}{2\epsilon}}\left(e^{-\frac{1}{4\epsilon}t}-e^{\frac{1-x}{2\epsilon}}\right)\label{y1v112}.
\end{eqnarray}
Therefore, employing (\ref{166})--(\ref{til2tra}) the reference trajectory and reference inputs are given explicitly as
\begin{eqnarray}
u^{\rm r}(x,t)&=&\frac{1}{4}t+\frac{1-x}{2}\label{y1v112n}\\
U_0^{\rm r}(t)&=&U_1^{\rm r}(t)\!=\!-\frac{1}{2}.
\end{eqnarray}
The feedback control laws are given in (\ref{con1}), (\ref{con2}) with $c_1\!=\!1$.

Note that although the reference trajectory (\ref{y1v112n}) doesn't satisfy the conditions of Theorem \ref{theorem 1} (since $y_1^{\rm r}(t)=\frac{1}{4}t$ is not uniformly bounded) trajectory tracking is achieved, which is explained as follows. The trajectory tracking problem is solvable provided that stabilization of the zero equilibrium of system (\ref{error1})--(\ref{error2}) implies stabilization of system (\ref{sys0erver})--(\ref{sysnerver}), which is possible when relations (\ref{lemrel1nne}) and (\ref{lemrel1nne1}) hold. In the case of the reference trajectory given by (\ref{y1v112n}) relation (\ref{lemrel1nne}) holds, but relation (\ref{lemrel1nne1}) does not. However, since from relation (\ref{transformation1add}) it holds that $\tilde{\bar{v}}(x,t)=\tilde{{v}}(x,t)e^{\frac{1}{2\epsilon}+\frac{1}{4\epsilon}t}$, stabilization of system (\ref{sys0erver})--(\ref{sysnerver}) is achieved provided that the convergence rate of the $H_1$ norm of $\tilde{v}$ is larger than $\frac{1}{4\epsilon}$, which holds true whenever $c_1>0$ (that would also imply from (\ref{bac1}), (\ref{bac2}) that the control inputs (\ref{convtil1}), (\ref{sysnerver}) are bounded). This in turn implies that in order for stabilization to be achieved the full-state feedback control laws should be employed, whereas when $c_1=0$ the closed-loop system is not asymptotically stable. To see this, note that because $u_x^{\rm r}(x,t)=-\frac{b}{2}$ system (\ref{sys0erver})--(\ref{sysnerver}) reduces to $\tilde{\bar{v}}_t(x,t)=\epsilon \tilde{\bar{v}}_{xx}(x,t)$, $\tilde{\bar{v}}_x(0,t)=\tilde{\bar{v}}_x(1,t)=0$ when $\tilde{V}_0(t)=\tilde{V}_1(t)=0$. This strengthens the motivation for the design of the bilateral, full-state feedback controllers. 

\subsection{Trajectory tracking}
We choose $\epsilon=0.25$, whereas the initial condition is defined as $u(x,0)=u^{\rm r}(x,0)+0.1\textrm{sin}\left(\pi x\right)=\frac{1-x}{2}+0.1\textrm{sin}\left(\pi x\right)$. In Fig. \ref{fig1} we show the output $u(1,t)$, from which it is evident that asymptotic trajectory tracking is achieved.\begin{figure}[h]
\centering
\includegraphics[width=\linewidth]{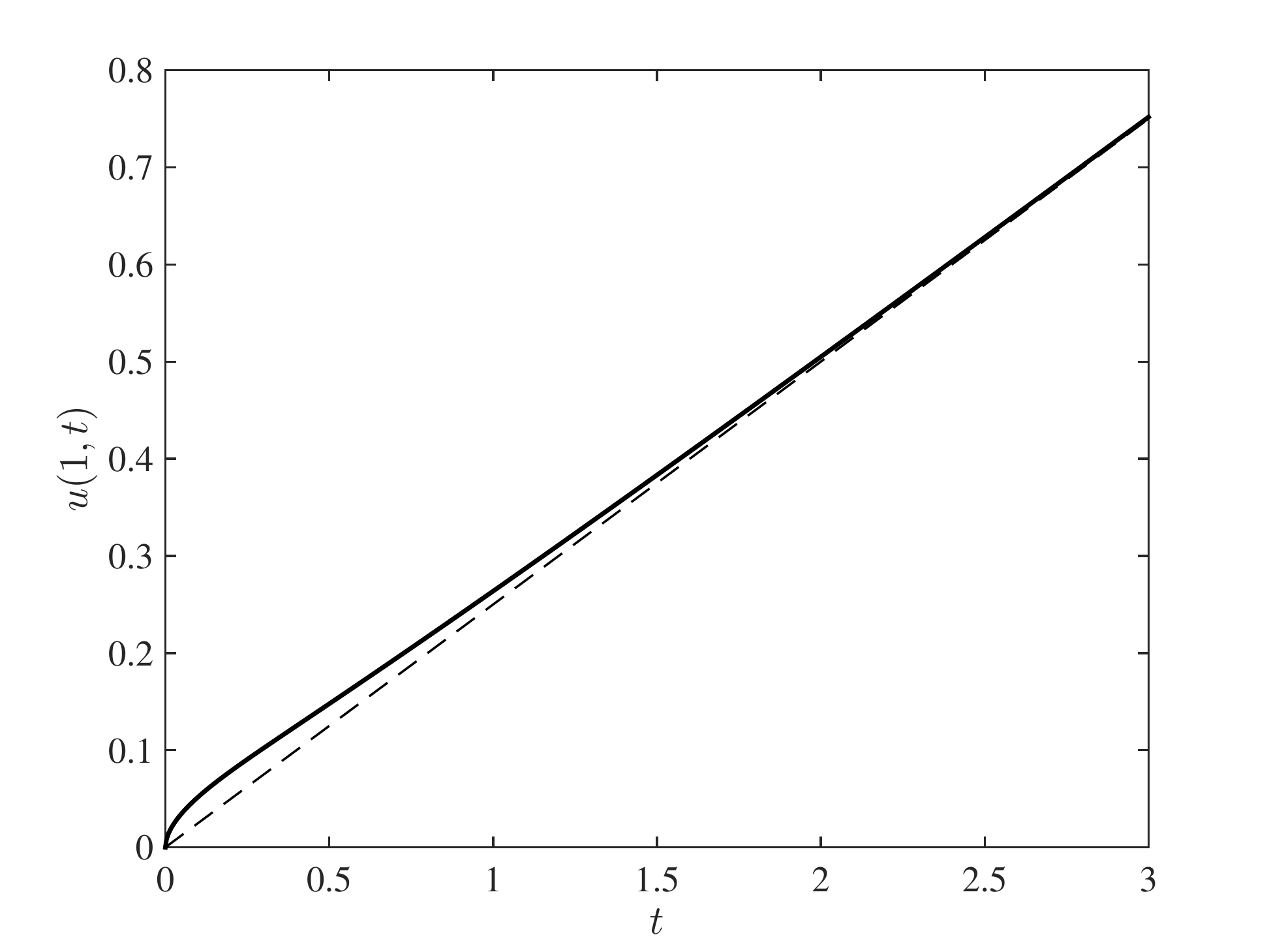}
\caption{Solid line: The output $u(1,t)$ of system (\ref{sys1})--(\ref{sys3}) with $a=b=1$, $\epsilon=0.25$, under the feedback laws (\ref{con1}), (\ref{con2}), (\ref{166})--(\ref{til2tra}) with $c_1=1$ for initial condition $u(x,0)=\frac{1-x}{2}+0.1\textrm{sin}\left(\pi x\right)$. Dashed line: The reference output $u^{\rm r}(1,t)=\frac{1}{4}t$.}
\label{fig1}
\end{figure}
In Fig. \ref{fig4}, we show the highway density $\rho(x,t)$. One can observe that the density converges to the desired reference profile, namely, to the uniform profile $\rho^{\rm e}(x)=\frac{1}{2}$, for all $x\in[0,1]$. Note that the output $u_x(1,t)$ equals $-\rho(1,t)$ and, according to Fig. \ref{fig4}, converges to $u_x^{\rm r}(1,t)=-\frac{1}{2}$. \begin{figure}[h]
\centering
\includegraphics[width=\linewidth]{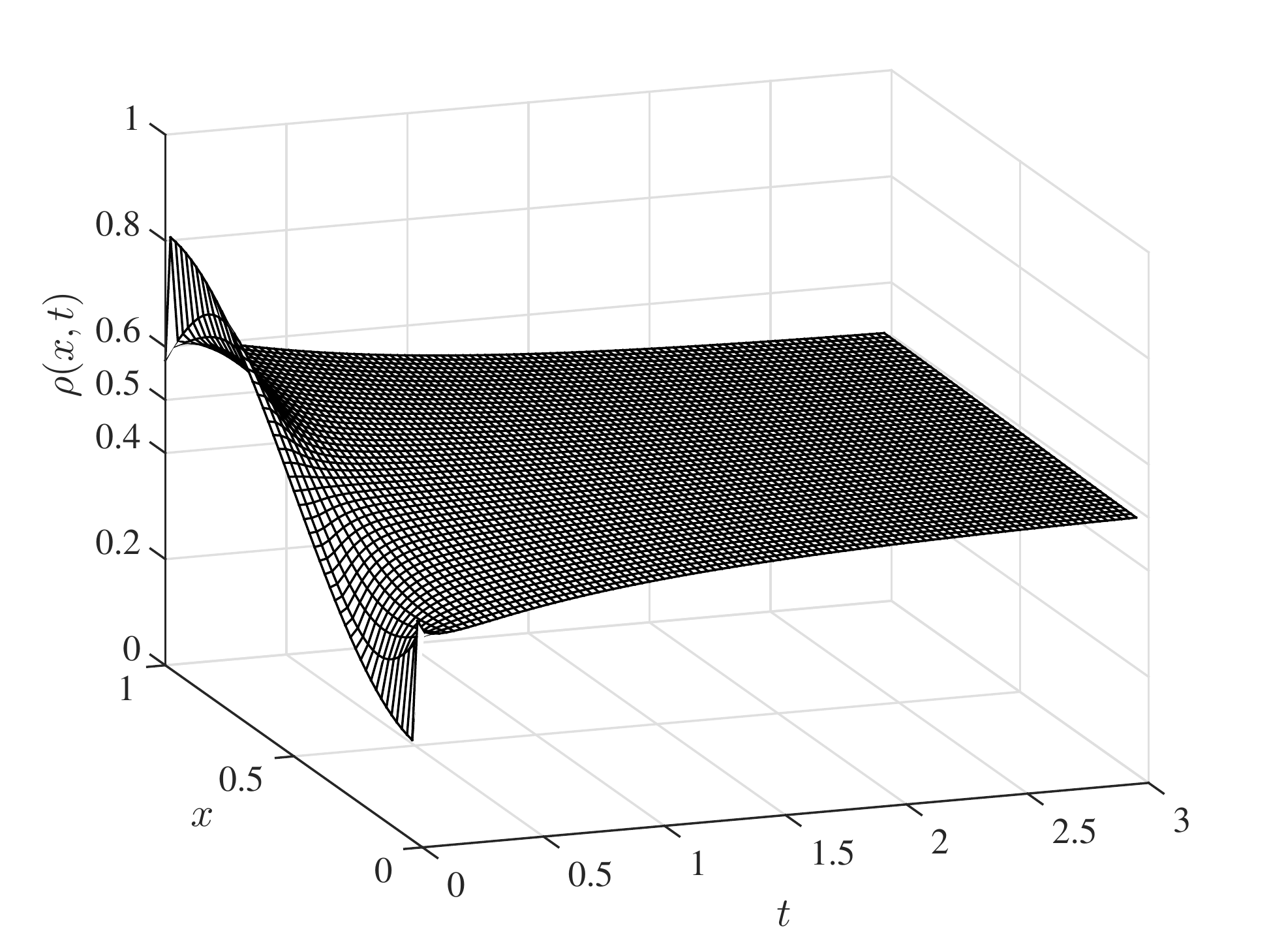}
\caption{The density evolution of the highway stretch.}
\label{fig4}
\end{figure}

\subsection{Control effort comparison with the unilateral case}
In Fig. \ref{fig2} we show the control efforts (\ref{con1}), (\ref{con2}), (\ref{166})--(\ref{til2tra}) of the bilateral boundary control design as well as the control efforts in the unilateral case, in which, a full-state feedback law is employed only at the one boundary, while, at the other end, only the static, collocated output feedback law (\ref{static1}) is applied (for the same initial conditions and reference outputs). The control laws in the unilateral case are designed such that the same decay rate for the closed-loop system is obtained (or, in other words, the same target system $w$ is obtained). The unilateral backstepping controller is derived from (\ref{con2}) replacing $k$ by the kernel $k_1(x,y)=-\frac{c_1}{\epsilon}x\frac{I_1\left(\sqrt{\frac{c_1}{\epsilon}\left(x^2-y^2\right)}\right)}{\sqrt{\frac{c_1}{\epsilon}\left(x^2-y^2\right)}}$ (see, e.g., \cite{smyshlyaev}), whereas in the present numerical example, the control law (\ref{static1}) simplifies to the reference input\footnote{Similarly, one could apply a backstepping controller only at the end $x=0$.}. The unilateral control laws are then given as
\begin{eqnarray}
{U}^{\rm uni}_0(t)&=&U_0^{\rm r}(t)\label{first uni}\\
{U}^{\rm uni}_1(t)&=&-\epsilon e^{\frac{1}{\epsilon}{\tilde{u}}(1,t)}\left(\vphantom{\int_{0}^1K_x\left(\frac{1}{2},\xi-\frac{1}{2}\right)}\left(k_1\left(1,1\right)+\frac{1}{2\epsilon}\right)\left(e^{-\frac{1}{\epsilon}\tilde{u}(1,t)}-1\right)\right.\nonumber\\
&&\left.\vphantom{\int_{0}^1K_x\left(\frac{1}{2},\xi-\frac{1}{2}\right)}+e^{\frac{1}{2\epsilon}+\frac{1}{\epsilon}u^{\rm r}\left(1,t\right)}\int_{0}^1k_{1_x}\left(1,\xi\right)e^{-\frac{1}{2\epsilon}\xi-\frac{1}{\epsilon}u^{\rm r}\left(\xi,t\right)}\right.\nonumber\\
&&\left.\times\left(e^{-\frac{1}{\epsilon}\tilde{u}\left(\xi,t\right)}-1\right)d\xi\vphantom{\int_{0}^1K_x\left(\frac{1}{2},\xi-\frac{1}{2}\right)}\right)+U_1^{\rm r}(t)e^{\frac{1}{\epsilon}{\tilde{u}}(1,t)}.\label{con2ex1}
\end{eqnarray}
From Fig. \ref{fig2} it is evident that the unilateral control design results in larger control effort, although the convergence rate of the closed-loop system would be identical to the bilateral case. Thus, although in both cases actuation is applied at both ends, the bilateral control design results in a feedback law that utilizes more efficiently both the available actuators and the available measurements. It should be also noted that, from a traffic flow control perspective, such large control values may lead to practically unrealistic ordered values for flows or speeds.

\begin{figure}[h]
\centering
\includegraphics[width=\linewidth]{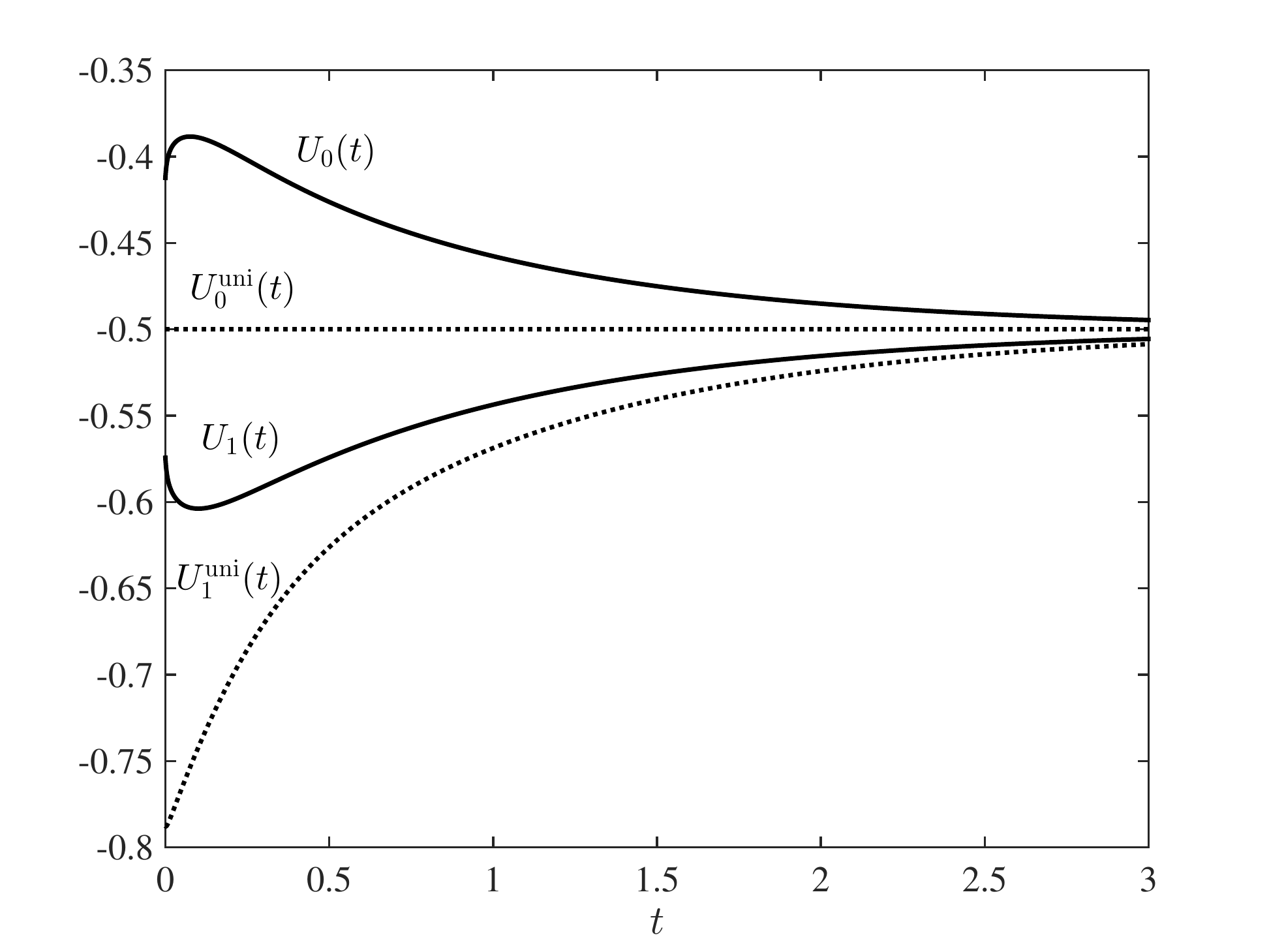}
\caption{Solid lines: Control efforts (\ref{con1}), (\ref{con2}), (\ref{166})--(\ref{til2tra}). Dotted lines: Control efforts (\ref{first uni}), (\ref{con2ex1}) of the unilateral controllers.}
\label{fig2}
\end{figure}



\section{Conclusions}
\label{conclude}
For a class of viscous HJ PDEs with actuation and sensing at both boundaries we, 1) solved the nonlinear trajectory generation problem, 2) presented nonlinear, bilateral full-state feedback control designs, 3) constructed a nonlinear observer as well as observer-based output-feedback controllers, 4) established local asymptotic stability of the closed-loop systems under the developed controllers, 5) illustrated our results in simulation via a traffic flow control example.

As a potential topic of future research one may consider problems that involve interconnections of viscous HJ PDEs with Ordinary Differential Equations (ODEs), as it is the case, for example, in \cite{hasan1}, which considers an interconnected system consisting of a viscous Burgers PDE and a linear ODE. The bilateral backstepping design used in this work can potentially deal with more complex PDE-ODE couplings than the standard unilateral design, thus we expect to be able to consider new families of previously unexplored systems. Another possible next step may be problems that incorporate viscous HJ PDE systems with actuator (or sensor) dynamics governed by certain types of ODEs or PDEs, as it is the case with, e.g., \cite{meurer fred}, \cite{liu}, which are dealing with viscous Burgers PDEs with ODE input dynamics.

\setcounter{equation}{0}
\renewcommand{\theequation}{A.\arabic{equation}}
\setcounter{lemma}{0}
\renewcommand{\thelemma}{A.\arabic{lemma}}
\section*{Appendix A}

\subsection*{Technical lemmas}

\begin{lemma}
\label{lemma1}
Let $f(t)$ be in $G_{F,M,\gamma}\left([0,+\infty)\right)$ with $\gamma\in[1,2)$. Then the function $g(t)=e^{f(t)}-1$ belongs to $G_{F_1,M_1,\gamma}\left([0,+\infty)\right)$ with $F_1=Fe^F$ and $M_1=Me^{F}$. 
\end{lemma}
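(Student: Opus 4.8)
The plan is to derive the Gevrey bound for $g(t) = e^{f(t)} - 1$ by differentiating $e^{f(t)}$ via the Fa\`a di Bruno formula (equivalently, via the exponential formula for Bell polynomials) and then bounding the resulting combinatorial sum using the assumed bounds $|f^{(n)}(t)| \leq F M^n (n!)^\gamma$. Since $g^{(n)} = \left(e^{f}\right)^{(n)}$ for $n \geq 1$ (the constant $-1$ drops out), it suffices to estimate the derivatives of $h(t) := e^{f(t)}$. First I would record the identity $h^{(n)}(t) = e^{f(t)} B_n\!\left(f'(t), f''(t), \ldots, f^{(n)}(t)\right)$, where $B_n$ is the $n$-th complete Bell polynomial, so that
\[
h^{(n)}(t) = e^{f(t)} \sum \frac{n!}{k_1!\,k_2!\cdots k_n!} \prod_{j=1}^n \left(\frac{f^{(j)}(t)}{j!}\right)^{k_j},
\]
the sum running over all $(k_1,\ldots,k_n)$ with $\sum_j j\, k_j = n$.

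Next I would substitute the Gevrey bounds. Using $|f^{(j)}(t)| \leq F M^j (j!)^\gamma \leq F M^j (j!)^2$ (valid since $\gamma < 2$ and $j! \geq 1$), each factor satisfies $\left|f^{(j)}/j!\right| \leq F M^j (j!)$, hence $\left|f^{(j)}/j!\right|^{k_j} \leq F^{k_j} M^{j k_j} (j!)^{k_j}$. Also $e^{f(t)} \leq e^{|f(t)|} \leq e^F$ uniformly in $t$. Collecting the powers of $M$ gives $\prod_j M^{j k_j} = M^{\sum_j j k_j} = M^n$, and writing $K = \sum_j k_j$ for the total number of blocks, the $F$ factors combine to $F^K$. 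The delicate point is the product $\prod_j (j!)^{k_j}$ together with the multinomial coefficient $n!/\prod_j k_j!$: I would use the standard partition estimate $\prod_{j=1}^n (j!)^{k_j} \cdot \prod_j k_j! \,\big/\, \text{(overcounting)} \le n!$ — more precisely, the number-theoretic fact that for a partition of $n$ with parts' multiplicities $k_j$, one has $n! \big/ \left(\prod_j (j!)^{k_j} \prod_j k_j!\right)$ is a positive integer (it counts set partitions of type $(k_j)$), so the full summand is bounded by $e^F \cdot F^K \cdot M^n \cdot n!$. Summing over all partitions and using $K \leq n$ together with $F \leq $ (say) a fixed bound, the number of terms times $F^K$ is controlled: grouping by $K$, $\sum_{K=1}^n (\text{\#partitions of }n\text{ into }K\text{ parts}) F^K \leq \sum_{K=1}^{n} S(n,K) F^K$ where $S(n,K)$ are Stirling numbers of the second kind, and $\sum_K S(n,K) F^K = B_n(F)$, the Bell polynomial evaluated at $F$; but a cleaner route is to just bound $\sum \frac{n!}{\prod k_j!}\prod (j!/j!)^{k_j}(\cdot)$ directly by the value $n!$ times the $n$-th Taylor coefficient structure of $e^{e^F\cdot(\text{something})}$.

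To make the bookkeeping clean I would instead argue at the level of generating functions / majorant series, which sidesteps the combinatorics. Introduce the formal majorant $\tilde f(s) = \sum_{j \geq 1} F M^j (j!)^{\gamma - 1} s^j \prec$ termwise the ``derivative-array'' of $f$ in the sense that $|f^{(j)}(t)|/j! \leq [s^j]\tilde f(s) \cdot$(appropriate factor). Since Fa\`a di Bruno is monotone under majorization of the derivative arrays, $|g^{(n)}(t)|/n! \leq [s^n]\left(e^{\tilde f(s)} - 1\right)\cdot e^{F}$ — wait, more carefully, with $\phi(s) := \sum_{j\ge 1}\frac{FM^j(j!)^{\gamma}}{j!}s^j = \sum_{j\ge1}FM^j(j!)^{\gamma-1}s^j$ one gets $\frac{|g^{(n)}(t)|}{n!}\le e^{F}[s^n]\bigl(e^{\phi(s)}-1\bigr)$; then one checks $[s^n]\bigl(e^{\phi(s)}-1\bigr)\le (M e^{F})^n (n!)^{\gamma-1}$ by a direct induction or by recognizing $e^{\phi(s)}$ is again Gevrey-$\gamma$ with the claimed constants. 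This last verification — that exponentiating a Gevrey-$\gamma$ majorant series keeps it Gevrey-$\gamma$ with $F \mapsto Fe^F$, $M \mapsto Me^F$ — is the real content, and I expect it to follow from the submultiplicativity $(n!)^{\gamma-1} \geq \sum_{\text{compositions}} \prod (k_i!)^{\gamma-1}$-type inequality (valid since $\gamma - 1 \geq 0$ makes $x \mapsto x^{\gamma-1}$ superadditive on the relevant integer products) combined with $\sum_{K}\binom{\cdots}{\cdots}F^K = (1+F)^{\cdots}$ or the plain bound $\sum_K \frac{F^K}{K!} = e^F$.

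\textbf{Main obstacle.} The one genuinely nontrivial step is controlling the factorial growth: showing that the combinatorial sum over partitions of $n$, with each block of size $j$ contributing $(j!)^{\gamma-1}$ and the multinomial weight $n!/\prod k_j!$, is bounded by $(n!)^{\gamma-1}$ times a geometric factor $C^n$. This hinges on $\gamma - 1 \in [0,1)$, which gives the crucial inequality $\prod_i (j_i!)^{\gamma-1} \le \left(\prod_i j_i!\right)^{\gamma-1} \le (n!)^{\gamma-1}$ for any composition $j_1+\cdots = n$ (the first because $0\le\gamma-1$; the second because $\prod j_i! \mid n!$ hence $\prod j_i!\le n!$), so that $\gamma < 2$ is used exactly here and nowhere else — for $\gamma \geq 2$ the claim would fail. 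Everything else is routine: reorganizing the Bell-polynomial sum, pulling out $e^F$ from $e^{f(t)}$, and summing the harmless series $\sum_K F^K/K! = e^F$ to produce the constants $F_1 = Fe^F$ and $M_1 = Me^F$.
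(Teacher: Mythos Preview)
Your approach via Fa\`a di Bruno is genuinely different from the paper's. The paper instead expands $e^{f}-1 = \sum_{k\ge 1} f^{k}/k!$ and proves by induction on $k$ (using only the Leibniz rule) that $\sup_{t}\bigl|(f^{k})^{(n)}\bigr| \le (n+1)^{k-1} F^{k} M^{n} (n!)^{\gamma}$; the key step there is the inequality $\sum_{i=0}^{n}\binom{n}{i}^{1-\gamma} \le n+1$ (each term is $\le 1$ since $\gamma\ge 1$), and the constants then emerge from $\frac{1}{n+1}\bigl(e^{F(n+1)}-1\bigr) \le Fe^{F(n+1)}$. Your route is viable, and the ``main obstacle'' you isolate --- that $\prod_{i}(j_{i}!)^{\gamma-1} \le (n!)^{\gamma-1}$ for any composition $j_{1}+\cdots+j_{K}=n$, because $\gamma-1\ge 0$ and $\prod_{i} j_{i}! \mid n!$ --- is precisely the right inequality; it is the multi-part analogue of the paper's binomial estimate.

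However, the execution has two concrete problems. First, the lossy substitution $(j!)^{\gamma} \le (j!)^{2}$ is an error, not merely a convenience: carrying it through, each Fa\`a di Bruno summand becomes $e^{F} F^{K} M^{n}\,\frac{n!}{\prod k_{j}!}\prod_{j}(j!)^{k_{j}}$, and the integrality bound $\prod_{j}(j!)^{k_{j}}\prod_{j} k_{j}! \le n!$ yields only $e^{F}F^{K}M^{n}(n!)^{2}/(\prod k_{j}!)^{2}$, not $e^{F}F^{K}M^{n}\,n!$ as you claim --- this is too large for a Gevrey-$\gamma$ conclusion when $\gamma<2$. You must keep the sharp exponent $(j!)^{\gamma-1}$ throughout (as you eventually do). Second, the final summation is not ``the harmless series $\sum_{K}F^{K}/K! = e^{F}$'': after the correct factorial bound one is left with $\sum_{(k_{j}):\,\sum j k_{j}=n} F^{K}/\prod_{j} k_{j}!$, which equals $\sum_{K=1}^{n}\frac{F^{K}}{K!}\binom{n-1}{K-1}$ (group by the number of parts and count compositions). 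This depends on $n$ and is where the growth of $M_{1}$ actually originates; bounding it by $\sum_{K=1}^{n}F^{K}\binom{n-1}{K-1}=F(1+F)^{n-1} \le F e^{Fn}$ then gives exactly $|g^{(n)}|\le Fe^{F}\,(Me^{F})^{n}(n!)^{\gamma}$. So your strategy does work, but the constants do not fall out of $\sum_{K}F^{K}/K!$ alone.
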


\begin{proof}
From the power series expansion of the exponential function and the triangular inequality we obtain that
\begin{eqnarray}
\left|g^{(n)}(t)\right|&\leq&\sum_{k=1}^{\infty}\frac{\left|{\frac{d^nf(t)^k}{dt^n}}\right|}{k!}.\label{y1vnewap}
\end{eqnarray}
We claim that for any $k$-th power of $f$ the following holds 
\begin{eqnarray}
\sup_{t\geq0}\left|\frac{d^nf(t)^k}{dt^n}\right|&\leq&(n+1)^{k-1}F^{k}M^n\left(n!\right)^{\gamma},\nonumber\\
&& \mbox{$n=0,1,\ldots$},\label{y1vnewap1}
\end{eqnarray}
which we prove by induction. For $k=1$ our claim is true by assumption. Assume next that relation (\ref{y1vnewap1}) holds for $k>1$. We show below that it holds for $k+1$. Toward that end, we employ Leibniz formula for the $n$-th derivative of the product of two functions to get that
\begin{eqnarray}
\left|\frac{d^nf(t)^{k+1}}{dt^n}\right|&=&\left|\frac{d^n \left( f(t)^{k}f(t)  \right)}{dt^n}\right|\nonumber\\
&=&\left|\sum_{i=0}^{n}\binom{n}{i}f^{(i)}(t)\frac{d^{n-i} \left( f(t)^{k}  \right)}{dt^{n-i}}\right|,\label{y1vnewap2}
\end{eqnarray}
and hence, using (\ref{y1vnewap1}) we obtain
\begin{eqnarray}
\left|\frac{d^nf(t)^{k+1}}{dt^n}\right|&\leq&\sum_{i=0}^{n}\binom{n}{i}\left|f^{(i)}(t)\right|(n-i+1)^{k-1}F^{k}\nonumber\\
&&\times M^{n-i}\left((n-i)!\right)^{\gamma},\label{y1vnewap3}
\end{eqnarray}
which, under the assumption that $f\in G_{F,M,\gamma}\left([0,+\infty)\right)$, in turn implies that
\begin{eqnarray}
\left|\frac{d^nf(t)^{k+1}}{dt^n}\right|&\leq&F^{k+1}M^{n}(n+1)^{k-1}\nonumber\\
&&\times\sum_{i=0}^{n}\binom{n}{i}\left(i!\right)^{\gamma}\left((n-i)!\right)^{\gamma}.\label{y1vnewap4}
\end{eqnarray}
From the definition of the binomial coefficient we obtain from (\ref{y1vnewap4}) that
\begin{eqnarray}
\left|\frac{d^nf(t)^{k+1}}{dt^n}\right|&\leq&(n+1)^{k-1}F^{k+1}M^{n}\left(n!\right)^{\gamma}\nonumber\\
&&\times\sum_{i=0}^{n}\frac{\left(i!\right)^{\gamma-1}\left((n-i)!\right)^{\gamma-1}}{\left(n!\right)^{\gamma-1}}\nonumber\\
&=&(n+1)^{k-1}F^{k+1}M^{n}\left(n!\right)^{\gamma}\nonumber\\
&&\times\sum_{i=0}^{n}\binom{n}{i}^{1-\gamma},\label{y1vnewap5}
\end{eqnarray}
which gives 
\begin{eqnarray}
\left|\frac{d^nf(t)^{k+1}}{dt^n}\right|&\leq&(n+1)^kF^{k+1}(M)^{n}\left(n!\right)^{\gamma},\label{y1vnewap6}
\end{eqnarray}
where we used the fact that $\sum_{i=0}^{n}\binom{n}{i}^{1-\gamma}\leq\sum_{i=0}^{n}1=n+1$, for $\gamma\in[1,2)$. Therefore, relation (\ref{y1vnewap1}) is established, which implies from (\ref{y1vnewap}) that
\begin{eqnarray}
\left|g^{(n)}(t)\right|&\leq&M^{n}\left(n!\right)^{\gamma}\frac{1}{n+1}\sum_{k=1}^{\infty}\frac{F^{k}\left(n+1\right)^k}{k!},\label{y1vnewap7}
\end{eqnarray}
and hence,
\begin{eqnarray}
\left|g^{(n)}(t)\right|&\leq&M^{n}\left(n!\right)^{\gamma}\frac{1}{n+1}\left(e^{F(n+1)}-1\right).\label{y1vnewap89}
\end{eqnarray}
Using the fact that $e^r-1\leq re^r$, for all $r\geq0$ we get the following estimate for all $n=0,1,2,\ldots$
\begin{eqnarray}
\sup_{t\geq0}\left|{g}^{(n)}(t)\right|&\leq& Fe^F \left(e^FM\right)^{n}\left(n!\right)^{\gamma},\label{nik}
\end{eqnarray}
which concludes the proof.
\qed
\end{proof}

\begin{lemma}
\label{lemma2}
Let $\bar{f}(t)$ and $\bar{g}(t)$ be in $G_{F,M,\gamma}\left([0,+\infty)\right)$ with $\gamma\in[1,2)$. Then the function $h(t)=e^{\bar{f}(t)}\bar{g}(t)$ belongs to $G_{F_2,M_2,\gamma}\left([0,+\infty)\right)$ with $F_2=F\left(1+Fe^F\right)$ and $M_2=\left(1+Fe^F\right)Me^F$.
\end{lemma}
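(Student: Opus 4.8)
The plan is to reduce the statement to Lemma~\ref{lemma1} together with a single application of Bernoulli's inequality. First I would write $e^{\bar{f}(t)}=1+g(t)$ with $g(t)=e^{\bar{f}(t)}-1$; then, by Lemma~\ref{lemma1}, $g\in G_{Fe^{F},Me^{F},\gamma}\left([0,+\infty)\right)$, and hence $h(t)=\bar{g}(t)+g(t)\bar{g}(t)$. It therefore suffices to bound the $n$-th derivative of each of the two summands.

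For the first summand the Gevrey bound on $\bar{g}$ gives directly $\left|\bar{g}^{(n)}(t)\right|\leq FM^{n}\left(n!\right)^{\gamma}\leq F\left(Me^{F}\right)^{n}\left(n!\right)^{\gamma}$, using $e^{F}\geq1$. For the product $g\bar{g}$ I would use the Leibniz formula
\[
\left|\left(g\bar{g}\right)^{(n)}(t)\right|\leq\sum_{i=0}^{n}\binom{n}{i}\left|g^{(i)}(t)\right|\left|\bar{g}^{(n-i)}(t)\right|,
\]
insert $\left|g^{(i)}(t)\right|\leq Fe^{F}\left(Me^{F}\right)^{i}\left(i!\right)^{\gamma}$ and $\left|\bar{g}^{(n-i)}(t)\right|\leq FM^{n-i}\left((n-i)!\right)^{\gamma}$, and then observe both that $\left(Me^{F}\right)^{i}M^{n-i}=M^{n}e^{Fi}\leq\left(Me^{F}\right)^{n}$ and that $\binom{n}{i}\left(i!\right)^{\gamma}\left((n-i)!\right)^{\gamma}=\left(n!\right)^{\gamma}\binom{n}{i}^{1-\gamma}\leq\left(n!\right)^{\gamma}$ for $\gamma\geq1$ (the same binomial manipulation already used in the proof of Lemma~\ref{lemma1}). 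Summing over the $n+1$ values of $i$ then yields $\left|\left(g\bar{g}\right)^{(n)}(t)\right|\leq F^{2}e^{F}\left(n+1\right)\left(Me^{F}\right)^{n}\left(n!\right)^{\gamma}$.

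Adding the two estimates gives $\left|h^{(n)}(t)\right|\leq F\left(1+Fe^{F}\left(n+1\right)\right)\left(Me^{F}\right)^{n}\left(n!\right)^{\gamma}$, so the last step is to absorb the factor $1+Fe^{F}(n+1)$ into a purely geometric factor $\left(1+Fe^{F}\right)^{n}$, which is exactly Bernoulli's inequality $1+(n+1)Fe^{F}\leq\left(1+Fe^{F}\right)^{n+1}$, valid since $Fe^{F}\geq0$. This produces $\left|h^{(n)}(t)\right|\leq F\left(1+Fe^{F}\right)\left(\left(1+Fe^{F}\right)Me^{F}\right)^{n}\left(n!\right)^{\gamma}=F_{2}M_{2}^{n}\left(n!\right)^{\gamma}$ with $F_{2}$, $M_{2}$ as claimed, which finishes the proof.

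I expect the only point needing any care is verifying that the Leibniz sum costs no more than a factor linear in $n$ (so that Bernoulli alone is enough to hide it in the base $M_{2}$, rather than having to enlarge $M$ by a larger amount); everything else is a routine substitution of the Gevrey bounds supplied by Lemma~\ref{lemma1}.
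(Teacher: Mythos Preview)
Your proof is correct and follows essentially the same route as the paper's: the same decomposition $h=\bar g+(e^{\bar f}-1)\bar g$, Leibniz on the product together with Lemma~\ref{lemma1}, the binomial identity $\binom{n}{i}(i!)^{\gamma}((n-i)!)^{\gamma}=(n!)^{\gamma}\binom{n}{i}^{1-\gamma}$, and Bernoulli to absorb the resulting factor $(n+1)$. The only cosmetic difference is that the paper writes $M^*=\max\{M,M_1\}$ before specializing to $M_1=Me^{F}$, whereas you go directly to $Me^{F}$.
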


\begin{proof}
We start by writing the function $h$ as $h(t)=\left(e^{\bar{f}(t)}-1\right)\bar{g}(t)+\bar{g}(t)$. Using Leibniz formula for the $n$-th derivative of the product of two functions we get $\left|{h}^{(n)}(t)\right|=\left|\sum_{i=0}^{n}\binom{n}{i}\bar{g}^{(i)}(t)\frac{d^{n-i} \left( e^{\bar{f}(t)}-1 \right)}{dt^{n-i}}\vphantom{\sum_{i=0}^{n}\binom{n}{i}\bar{g}^{(i)}(t)\frac{d^{n-i} \left( e^{\bar{f}(t)}-1 \right)}{dt^{n-i}}}+\bar{g}^{(n)}(t)\right|$, and hence, from Lemma \ref{lemma1} we obtain
\begin{eqnarray}
\left|{h}^{(n)}(t)\right|&\leq&\sum_{i=0}^{n}\binom{n}{i}FM^{i}\left(i!\right)^{\gamma}F_1M_1^{n-i}\left((n-i)!\right)^{\gamma}\nonumber\\
&&+FM^{n}\left(n!\right)^{\gamma}\nonumber\\
&\leq&FF_1{M^*}^{n}\sum_{i=0}^{n}\binom{n}{i}\left(i!\right)^{\gamma}\left((n-i)!\right)^{\gamma}\nonumber\\
&&+F{M^*}^{n}\left(n!\right)^{\gamma},\label{y1vnewap9559}
\end{eqnarray}
where $M^*=\max\left\{M,M_1\right\}$. With similar arguments to the proof of Lemma \ref{lemma1} and employing Bernoulli's inequality we get the following estimate for all $n=0,1,2,\ldots$
\begin{eqnarray}
\sup_{t\geq0}\left|{h}^{(n)}(t)\right|&\leq& F\left(1+F_1\right){M^*}^n\left(1+F_1\right)^{n}\left(n!\right)^{\gamma}.\label{nikpr}
\end{eqnarray}
The proof is completed using the expressions for $M_1$ and $F_1$ from Lemma \ref{lemma1}. 
\qed
\end{proof}

\setcounter{equation}{0}
\renewcommand{\theequation}{B.\arabic{equation}}
\setcounter{lemma}{0}
\renewcommand{\thelemma}{B.\arabic{lemma}}
\section*{Appendix B}

\subsection*{Proof of Lemma \ref{lemma 1thm2}}

Using the fact that for the function $f(r)=e^{-\frac{a}{\epsilon}r}-1$ it holds $|f(r)|\leq\frac{|a||r|}{\epsilon}e^{\frac{|a||r|}{\epsilon}}$, $\forall r\in\mathbb{R}$, we get from (\ref{transformation1er}) that $|\tilde{\bar{v}}(x,t)|\leq\hat{\alpha}\left(|\tilde{u}(x,t)|\right)$, where $\hat{\alpha}(s)=\frac{|a|s}{\epsilon}e^{\frac{|a|s}{\epsilon}}\in\mathcal{K}_{\infty}$. Hence, $|\tilde{\bar{v}}(x,t)|\leq\hat{\alpha}\left(\sup_{x\in[0,1]}|\tilde{u}(x,t)|\right)$, $\forall x\in[0,1]$. For any $u\in H^1(0,1)$ we have $u(x,t)=u(0,t)+\int_0^xu_y(y,t)dy$. Hence, using Cauchy-Schwartz's inequality we get
\begin{eqnarray}
|u(x,t)|\leq |u(0,t)|+\sqrt{\int_0^1u_x(x,t)^2dx},\quad x\in[0,1].\label{rel03}
\end{eqnarray}
Since $u(0,t)=u(x,t)-\int_0^xu_y(y,t)dy$ we get $|u(0,t)|\leq|u(x,t)|+\sqrt{\int_0^1u_x(x,t)^2dx}$. Hence, by integrating we get 
\begin{eqnarray}
|u(0,t)|\leq\|u(t)\|_{H^1}. \label{rel23}
\end{eqnarray}
Since $\forall x$, $|\tilde{\bar{v}}(x,t)|\leq\hat{\alpha}\!\left(\sup_{x\in[0,1]}|\tilde{u}(x,t)|\right)$ and by  (\ref{rel03}), (\ref{rel23})
\begin{eqnarray}
\sup_{x\in[0,1]}|\tilde{u}(x,t)|\leq 2\|\tilde{u}(t)\|_{H^1},\label{sh}
\end{eqnarray}
we get $\sup_{x\in[0,1]}|\tilde{\bar{v}}(x,t)|\leq\hat{\alpha}\left(2\|\tilde{u}(t)\|_{H^1}\right)$. Thus,
\begin{eqnarray}
\|\tilde{\bar{v}}(t)\|_{L^2}\leq\hat{\alpha}\left(2\|\tilde{u}(t)\|_{H^1}\right).\label{relation0}
\end{eqnarray}
Differentiating (\ref{transformation1er}) we get with (\ref{sh}) that $\sqrt{\int_0^1\tilde{\bar{v}}_x(x,t)^2dx}$ $\leq\frac{|a|}{\epsilon}e^{\frac{2|a|}{\epsilon}\|\tilde{u}(t)\|_{H^1}}\sqrt{\int_0^1\tilde{u}_x(x,t)^2dx}$, and hence, with (\ref{relation0}) we get 
\begin{eqnarray}
\|\tilde{\bar{v}}(t)\|_{H^1}&\leq& \hat{\alpha}\left(2\|\tilde{u}(t)\|_{H^1}\right)\nonumber\\
&&+\frac{|a|}{\epsilon}e^{\frac{2|a|}{\epsilon}\|\tilde{u}(t)\|_{H^1}}\|\tilde{u}(t)\|_{H^1}.\label{boundnew}
\end{eqnarray}
The proof is completed with $\alpha_1(s)= \hat{\alpha}\left(2s\right)+\frac{|a|}{\epsilon}e^{\frac{2|a|}{\epsilon}s}s$.

\subsection*{Proof of Lemma \ref{lemma 2thm2}}

Using (\ref{transformationinv}), (\ref{feasibility}) we get
\begin{eqnarray}
|\tilde{u}(x,t)|\leq \frac{\epsilon}{|a|(1-c)}|\tilde{\bar{v}}(x,t)|, 
\end{eqnarray}
since $|\textrm{ln}\left(r+1\right)|\leq \frac{1}{1-c}|r|$, $\forall|r|< c$, $0<c<1$. Hence, $\|\tilde{u}(t)\|_{L^2}\leq \frac{\epsilon}{|a|(1-c)}\|\tilde{\bar{v}}(t)\|_{L^2}$. Using this relation and (\ref{feasibility}) we arrive at (\ref{lemrel1n}) by differentiating (\ref{transformationinv}) with respect to $x$.

\subsection*{Proof of Lemma \ref{lemma 3thm2}}

From (\ref{transformation1add}) and (\ref{166}) it follows that 
\begin{eqnarray}
\tilde{v}(x,t)=\tilde{\bar{v}}(x,t)e^{-\frac{ab}{2\epsilon}x}\left(e^{\frac{ab}{2\epsilon}x}v^{\rm r}(x,t)+1\right). \label{rel1}
\end{eqnarray}
Under the conditions of Theorem \ref{theorem 1}, which also guarantee that $v^{\rm r}$ is uniformly bounded with respect to time and spatial variable (see relation (\ref{Res1})), we obtain for all $x\in[0,1]$ and $t\geq t_0$
\begin{eqnarray}
\tilde{v}(x,t)^2\leq\nu_1\tilde{\bar{v}}(x,t)^2,
\end{eqnarray}
for some positive constant $\nu_1$, and hence,
\begin{eqnarray}
\|\tilde{v}(t)\|_{L^2}^2\leq\nu_1\|\tilde{\bar{v}}(t)\|_{L^2}^2.\label{bound2}
\end{eqnarray}
Moreover, differentiating (\ref{rel1}) with respect to $x$ we get that
\begin{eqnarray}
\tilde{v}_x(x,t)&=&\tilde{\bar{v}}_x(x,t)\left(v^{\rm r}(x,t)+e^{-\frac{ab}{2\epsilon}x}\right)\nonumber\\
&&+\tilde{\bar{v}}(x,t)\left(v_x^{\rm r}(x,t)-\frac{ab}{2\epsilon}e^{-\frac{ab}{2\epsilon}x}\right). \label{rel2}
\end{eqnarray}
Mimicking the arguments of boundedness for $v^{\rm r}$ in the proof of Theorem \ref{theorem 1}, it is shown that $v_x^{\rm r}$ is uniformly bounded with respect to time and spatial variable, and thus, it follows from (\ref{rel2}) that
\begin{eqnarray}
\|\tilde{v}_x(t)\|_{L^2}^2\leq\nu_2\|\tilde{\bar{v}}(t)\|_{H^1}^2, \label{rel3}
\end{eqnarray}
for some positive constant $\nu_2$. Combining (\ref{bound2}) with (\ref{rel3}) we arrive at (\ref{lemrel1nne}) with $\xi_1=\sqrt{\nu_1}+\sqrt{\nu_2}$. 

Under the conditions of Theorem \ref{theorem 1}, using relations (\ref{Res1}), (\ref{rel1}) we obtain from (\ref{rel1}) for all $x\in[0,1]$ and $t\geq t_0$
\begin{eqnarray}
\|\tilde{\bar{v}}(t)\|_{L^2}^2\leq\frac{e^{\left|\frac{ab}{\epsilon}\right|}}{\left(1-\bar{c}\right)^2}\|\tilde{{v}}(t)\|_{L^2}^2.\label{bound1}
\end{eqnarray}
With (\ref{Res1}), solving (\ref{rel2}) with respect to $\tilde{\bar{v}}_x$ and employing (\ref{bound1}) we show that
\begin{eqnarray}
\|\tilde{\bar{v}}_x(t)\|_{L^2}^2\leq\nu_3\|\tilde{{v}}(t)\|_{H^1}^2, \label{rel4}
\end{eqnarray}
for some positive constant $\nu_3$. Combining (\ref{bound1}) with (\ref{rel4}) we get estimate (\ref{lemrel1nne1}) with $\xi_2=\sqrt{\nu_3}+\frac{e^{\left|\frac{ab}{2\epsilon}\right|}}{1-\bar{c}}$.

\setcounter{equation}{0}
\renewcommand{\theequation}{C.\arabic{equation}}
\setcounter{lemma}{0}
\renewcommand{\thelemma}{C.\arabic{lemma}}
\section*{Appendix C}

\subsection*{Connection of transformation (\ref{trans1}) with the one from \cite{Vazquez1}}

It is convenient to define the shifted variables $z=x-\frac{1}{2}$ and $y=\xi-\frac{1}{2}$, to re-write transformation (\ref{trans1}) as
\begin{eqnarray}
w_1(z,t)=\tilde{v}_1(z,t)-\int_{-z}^zK\left(z,y\right)\tilde{v}_1\left(y,t\right)dy,\label{trans1z}
\end{eqnarray}
where we define $w_1(z,t)=w\left(z+\frac{1}{2},t\right)$, $\tilde{v}_1(z,t)=\tilde{v}\left(z+\frac{1}{2},t\right)$, and 
\begin{eqnarray}
K(z,y)=k\left(z+\frac{1}{2},y+\frac{1}{2}\right), \label{K1}
\end{eqnarray}
with $K$ being defined for all $-z\leq y\leq z$ when $0\leq z\leq \frac{1}{2}$ and for all $z\leq y\leq -z$ when $-\frac{1}{2}\leq z\leq 0$, where $k$ is given in (\ref{poo}). Noting that the $\tilde{v}_1$ variable satisfies the same exact PDE system (\ref{error1})--(\ref{error2}) as the $\tilde{v}$ variable (with the only difference that the boundary conditions for $\tilde{v}_1$ are taken for $z=-\frac{1}{2}$ and ${z}=\frac{1}{2}$, which correspond to $x=0$ and $x=1$, respectively) and that the kernel $K$ in (\ref{K1}) is the gain kernel introduced in \cite{Vazquez1}, we conclude (see \cite{Vazquez1}) that the $w_1$ variable in (\ref{trans1z}) satisfies the PDE $w_{1_t}(z,t)=\epsilon {w}_{1_{zz}}(z,t)-\left(\frac{a^2b^2}{4\epsilon}+c_1\right)w_1(z,t)$. The boundary condition $w_{1_z}\left(-\frac{1}{2},t\right)=0$ is obtained differentiating (\ref{trans1z}) with respect to $z$ and using (\ref{bac1}). The boundary condition $w_{1_z}\left(\frac{1}{2},t\right)=0$ is obtained analogously.

\setcounter{equation}{0}
\renewcommand{\theequation}{D.\arabic{equation}}
\setcounter{lemma}{0}
\renewcommand{\thelemma}{D.\arabic{lemma}}
\section*{Appendix D}

\subsection*{Derivation of (\ref{esys1z})--(\ref{esys2z}) from (\ref{z1})--(\ref{zn}) via (\ref{r1}), (\ref{r2})}
Using relation (\ref{gainR}), the kernel $p$ in (\ref{pp}) can be written as
\begin{eqnarray}
p\left(z,y\right)=-\frac{1}{2}\sqrt{\frac{c_2}{\epsilon}}\frac{{\rm I}_{1}\left(\sqrt{\frac{c_2}{\epsilon}\left(y^2-z^2\right)}\right)}{\sqrt{y^2-z^2}}\left(y+z\right),\label{kl}
\end{eqnarray}
which is defined in the domain $\bar{E}=\bar{E}_1\cup \bar{E}_2$, where $\bar{E}_1=\left\{\left(z,y\right):0\leq y\leq \frac{1}{2}, -y\leq z\leq y\right\}$ and $\bar{E}_2=\left\{\left(z,y\right):-\frac{1}{2}\leq y\leq 0, y\leq z\leq -y\right\}$. Function $p(z,y)$ satisfies the following problem in $\bar{E}_1\cup\bar{E}_2$
\begin{eqnarray}
\epsilon p_{yy}\left(z,y\right)-\epsilon p_{zz}\left(z,y\right)&=&c_2 p(z,y)\label{klg}\\
p(z,z)&=&-\frac{c_2}{2\epsilon}z\\
p(z,-z)&=&0,\label{3rel}
\end{eqnarray}
which can be either seen by direct computation (using the properties of Bessel functions, see, e.g., \cite{smyshlyaev}) or by using relations (75)--(77) from \cite{Vazquez1} and noting that $p(z,y)$ satisfies relation (78) from \cite{Vazquez1} with $x\to y$ and $\xi\to z$. Using (\ref{r1}), we next provide few details on the derivation of (\ref{esys1z}), (\ref{esys2z}) from (\ref{z1}), (\ref{zn}) (one can then derive (\ref{esys1z}), (\ref{e sys lot}) from (\ref{z1}), (\ref{zn lot}) via (\ref{r2}), employing identical arguments). In fact, the computations follow the standard, but tedious, algebraic manipulations involved in the backstepping methodology when deriving a target system for a specific case. In our case, differentiating (\ref{r1}) with respect to $t$ and $x$, and using equations (\ref{z1})--(\ref{zn}) as well as equations (\ref{klg})--(\ref{3rel}) we arrive at
\begin{eqnarray}
&&\bar{e}_t(z,t)-\epsilon \bar{e}_{zz}(z,t)+\frac{a^2b^2}{4\epsilon}\bar{e}(z,t)\nonumber\\
&&+\bar{p}_2(z)\bar{e}\left(-\frac{1}{2},t\right)+\bar{p}_1(z)\bar{e}\left(\frac{1}{2},t\right)=\nonumber\\
&&\bar{p}_2(z)\bar{e}\left(-\frac{1}{2},t\right)+\bar{p}_1(z)\bar{e}\left(\frac{1}{2},t\right)\nonumber\\
&&+\epsilon p_y\left(z,\frac{1}{2}\right)\bar{w}\left(\frac{1}{2},t\right)+\epsilon p_y\left(z,-\frac{1}{2}\right)\bar{w}\left(-\frac{1}{2},t\right)\!.\label{alm}
\end{eqnarray}
Therefore, using (\ref{r1}) and (\ref{r2}) for $z=\frac{1}{2}$ and $z=-\frac{1}{2}$, respectively, one can conclude that the right-hand side of (\ref{alm}) is zero when $\bar{p}_2(z)=-\epsilon p_y\left(z,-\frac{1}{2}\right)$ and $\bar{p}_1(z)=-\epsilon p_y\left(z,\frac{1}{2}\right)$. In the shifted variables $x=z+\frac{1}{2}$ and $\xi=y+\frac{1}{2}$, it follows from (\ref{gainR}), (\ref{kl}) that $P\left(x,\xi\right)=P\left(z+\frac{1}{2},y+\frac{1}{2}\right)=p(z,y)$, and hence, ${p}_2(x)=-\epsilon P_{\xi}\left(x,0\right)$ and ${p}_1(x)=-\epsilon P_{\xi}\left(x,1\right)$, which are the gains defined in (\ref{gain1}) and (\ref{gainp2}), respectively. Finally, differentiating (\ref{r1}) with respect to $z$ and setting $z=\frac{1}{2}$, we get employing (\ref{zn}), (\ref{3rel}) that $\bar{e}_z\left(\frac{1}{2},t\right)=p\left(\frac{1}{2},\frac{1}{2}\right)\bar{w}\left(\frac{1}{2},t\right)=p\left(\frac{1}{2},\frac{1}{2}\right)\bar{e}\left(\frac{1}{2},t\right)$, and hence, relation (\ref{esys2z}) is recovered utilizing the fact that $-p_{11}=P(1,1)=p\left(\frac{1}{2},\frac{1}{2}\right)$, which follows from (\ref{gain2}). One can then derive relations (\ref{esys1z}), (\ref{e sys lot}) from (\ref{z1}), (\ref{zn lot}) via (\ref{r2}), employing identical arguments.

\section*{Acknowledgments}
\textcolor{black}{Nikolaos Bekiaris-Liberis was supported by the funding from the European Commission's Horizon 2020 research and innovation programme under the Marie Sklodowska-Curie grant agreement No. 747898, project PADECOT.}

Rafael Vazquez acknowledges financial support of the Spanish Ministerio de Economia y Competitividad under grant MTM2015-65608-P.

\end{document}